\def\B3{\overline{B}_3}
\def\p3{p_{-3}}
\def\op{\overline{p}}
\newcommand{\legendre}[2]{\genfrac{(}{)}{}{}{#1}{#2}}
\newtheorem{theorem}{Theorem}[section]
\newtheorem{lemma}{Lemma}[section]
\newtheoremstyle{remark}
    {\dimexpr\topsep/2\relax} 
    {\dimexpr\topsep/2\relax} 
    {}          
    {}          
    {\bfseries} 
    {.}         
    {.5em}      
    {}          
\theoremstyle{remark}
\newtheorem{remark}{Remark}[section]
\begin{document}

\title[]{%
Arithmetic properties \\ of MacMahon-type sums of divisors}

\dedicatory{In grateful recognition of Krishnaswami Alladi's valued service as Editor--in--Chief of the {\it Ramanujan Journal} for more than a quarter century. Thanks to Alladi's leadership and inclusive spirit, the mathematics of Srinivasa Ramanujan has flourished within our community.}

\author{James A. Sellers}
\address{Department of Mathematics and Statistics, 
University of Minnesota Duluth, Duluth, MN 55812, USA}
\email{jsellers@d.umn.edu}

\author{Roberto Tauraso}
\address{Dipartimento di Matematica, 
Università di Roma ``Tor Vergata'', 00133 Roma, Italy}
\email{tauraso@mat.uniroma2.it}

\subjclass[2020]{Primary 11P83; Secondary 11P81, 05A17.}

\keywords{partitions, congruences.}

\begin{abstract} In this paper, we prove several new infinite families of Ramanujan--like congruences satisfied by the coefficients of the generating function $U_t(a, q)$ which is an extension of  MacMahon's generalized sum-of-divisors function. 
As a by-product, we also show that, for all $n\geq 0$, $\B3(15n+7)\equiv 0  \pmod{5}$  where $\B3(n)$ is the number of almost $3$-regular overpartitions of $n$.
\end{abstract}

\maketitle

\section{Introduction}

\smallskip
\noindent
In \cite{AAT2}, Amdeberhan, Andrews, and 
the second author introduced the generating function
\begin{equation} \label{defofU} 
U_t(a, q):=\sum_{1\leq n_1<n_2<\cdots<n_t}\frac{q^{n_1+n_2+\cdots+n_t}}{\prod_{k=1}^t(1+aq^{n_k}+q^{2n_k})}=\sum_{n\geq0} MO(a, t; n)q^n.
\end{equation}
The particular case $U_t(-2, q)$ was initially studied by P. A. MacMahon \cite{MacMahon} as a generalization of the sum-of-divisors function,
$$U_1(-2, q)=\sum_{n\geq 1} \frac{q^n}{(1-q^n)^2}=\sum_{n\geq 1} \sigma(n) q^n.$$
The function $U_t(-2, q)$ has received much attention in recent years; see \cite{AAT1, AOS, Andrews-Rose, OS} for more details about the results that have already been proven. 

In \cite{AAT2}, the authors investigated $U_t(a, q)$ for the other cases when the roots of $1+ax+x^2=0$ are also roots of unity. This occurs precisely for $a=0, \pm1, \pm2$. As such, in this note we will continue the study of $U_t(a, q)$ for this set of values of $a$, focusing our attention on arithmetic properties  satisfied by the coefficients $MO(a, t; n)$.

From  Theorem 2.1 and Appendix 2 in \cite{AAT2}, we have that $U_t(a, q)$ can be written
as the product of two generating functions:
\begin{equation}\label{idofU} 
U_t(a, q)=\sum_{k\geq 0} b_k(a) q^k \cdot \sum_{n\geq 0} c_{n}(a, t) q^{\binom{n+1}{2}}
\end{equation}
where
$$\sum_{k\geq 0} b_k(a) q^k=\prod_{n=1}^{\infty}\frac{1}{(1+aq^n+q^{2n})(1-q^n)}$$
and
$$c_n(a, t)=(-1)^{n-t}[z^n]\frac{z^t(1+z)}{(1+az+z^2)^{t+1}}$$
where $[z^n]\left( \sum_{k\geq 0} h_kq^k \right) = h_n$, the coefficient of $z^n$ in the given power series.

Letting  $a=0, \pm1, \pm2$, from \eqref{idofU} (see \cite[Section 5]{AAT2}), we obtain explicit formulas of the prefactor $\sum_{k\geq 0} b_k(a) q^k$ and the coefficient $c_{n}(a, t)$.  Such formulas will be pivotal in our work.

\medskip 

\begin{center}
\begin{tabular}{|c|c|c|}
\hline\rule[-3mm]{0mm}{8mm}
$a$ & $\sum_{k\geq 0} b_k(a) q^k$ & $c_{n}(a, t)$\\
\hline\rule[-6mm]{0mm}{15mm}
1 & $\displaystyle \frac{1}{f_3}$ 
& $\displaystyle\sum_{k=0}^{n} \frac{(-1)^{n+k} (2n+1)\binom{n+k+1}{2k+1}}{n+k+1}\binom{k}{t}  3^{k-t}$\\
\hline\rule[-6mm]{0mm}{15mm}
$-1$ & $\displaystyle\frac{f_2 f_3}{f_1^2 f_6}$ 
& $\displaystyle\sum_{k=0}^n \frac{(-1)^{n+k}(2n+1)\binom{n+k+1}{2k+1}}{n+k+1}\binom{k}{t}$\\
\hline\rule[-6mm]{0mm}{15mm}
2 & $\displaystyle \frac{f_1}{f_2^2}$ 
& $\displaystyle\binom{n+t}{2t}$\\
\hline\rule[-6mm]{0mm}{15mm}
$-2$ & $\displaystyle\frac{1}{f_1^3}$ 
& $\displaystyle (-1)^{n+t} \frac{2n+1}{2t+1} \binom{n+t}{2t}$\\
\hline\rule[-6mm]{0mm}{15mm}
0 & $\displaystyle \frac{f_2}{f_1 f_4}$ 
& $\displaystyle (-1)^{n+\lfloor (n+t)/2\rfloor} \binom{\lfloor (n+t)/2\rfloor}{t}$\\
\hline
\end{tabular}
\end{center}\medskip
where $f_r=(q^r;q^r)_{\infty} = \prod_{k\geq 1}(1-q^{rk})$.

Our overarching goal in this work is to prove new Ramanujan--like congruences satisfied by the functions $MO(a, t; AN+B)$ for various values of $a$, $t$, $A$, and $B$, thus extending the set of known results that already appear in the literature.  In the next section, we provide a list of such known results along with some obvious congruence relationships between some of these functions.  We then prove numerous new results in the subsequent sections which are based on the value of the parameter $a$.

\section{Known arithmetic properties}
\label{sec:known_properties}

We begin by highlighting a relatively obvious set of congruence results.  
\begin{theorem}
\label{overarching2}
For any $t$, 
$$
U_t(-1, q) \equiv U_t(1, q)  \pmod{2}   \text{\ \ \ and\ \ \ }
U_t(-2, q) \equiv U_t(0, q) \equiv U_t(2, q) \pmod{2}.
$$
\end{theorem}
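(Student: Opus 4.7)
The plan is to exploit the product decomposition \eqref{idofU} and verify that, modulo $2$, both factors — the prefactor series $\sum_{k\geq 0} b_k(a)q^k$ and the inner series $\sum_{n\geq 0} c_n(a,t)q^{\binom{n+1}{2}}$ — are constant on each cluster $\{a=1,-1\}$ and $\{a=0,\pm 2\}$. The claimed congruences then follow by multiplying the two factors.

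For the prefactors, the workhorse identity is the Frobenius-type relation $f_r^2 \equiv f_{2r}\pmod 2$. A short reduction gives $f_2 f_3/(f_1^2 f_6) \equiv f_2 f_3/(f_2\cdot f_3^2) = 1/f_3 \pmod 2$, which handles the $a=\pm 1$ case. For the other cluster, the same identity reduces each of $1/f_1^3$, $f_1/f_2^2$, and $f_2/(f_1 f_4)$ to $1/(f_1 f_2)\pmod 2$: for instance $f_2/(f_1 f_4) \equiv f_2/(f_1 f_2^2) = 1/(f_1 f_2)$, while $f_1/f_2^2 \equiv f_1/f_4$ becomes $1/(f_1 f_2)$ because $f_1^2 f_2 \equiv f_2^2 \equiv f_4\pmod 2$.

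For the coefficients, the $a=1$ and $a=-1$ formulas differ only by the odd factor $3^{k-t}$, so they are congruent mod $2$. In the remaining cluster I would compare everything to $c_n(2,t)=\binom{n+t}{2t}$. The identity $(2t+1)\,c_n(-2,t) = \pm(2n+1)\binom{n+t}{2t}$, together with the oddness of $2n+1$ and $2t+1$, yields $c_n(-2,t) \equiv \binom{n+t}{2t}\pmod 2$. The remaining comparison $\binom{\lfloor (n+t)/2\rfloor}{t} \equiv \binom{n+t}{2t}\pmod 2$ follows from Lucas' theorem: writing $m=n+t$, each side equals $1$ precisely when every binary digit of $t$ at position $j$ is dominated by the digit of $m$ at position $j+1$ — equivalently, by the digit of $\lfloor m/2\rfloor$ at position $j$.

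The only mildly delicate point I foresee is the Lucas-theorem step for $a=0$ versus $a=2$; the rest is routine manipulation with $f_r^2\equiv f_{2r}$ combined with the parities of $2n+1$, $2t+1$, and $3^{k-t}$.
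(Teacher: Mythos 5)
Your proof is correct, but it is a genuinely different (and much heavier) route than the paper's. The paper disposes of this theorem in one line directly from the definition \eqref{defofU}: expanding each factor $1/(1+aq^{n_k}+q^{2n_k})$ as a geometric series shows that every coefficient $MO(a,t;n)$ is a polynomial in $a$ with integer coefficients, so $a\equiv a'\pmod 2$ immediately forces $MO(a,t;n)\equiv MO(a',t;n)\pmod 2$; since $1\equiv -1$ and $2\equiv 0\equiv -2\pmod 2$, both chains of congruences follow at once (and the same observation gives Theorem \ref{overarching3} modulo $3$ for free). You instead verify the congruence factor-by-factor in the decomposition \eqref{idofU}, and each of your steps checks out: the reductions $f_2f_3/(f_1^2f_6)\equiv 1/f_3$ and $1/f_1^3\equiv f_1/f_2^2\equiv f_2/(f_1f_4)\equiv 1/(f_1f_2)$ via $f_r^2\equiv f_{2r}$ are right, the oddness of $3^{k-t}$, $2n+1$, $2t+1$ handles the coefficient comparisons for $a=\pm 1$ and $a=\pm 2$, and your Lucas/Kummer argument that $\binom{\lfloor (n+t)/2\rfloor}{t}\equiv\binom{n+t}{2t}\pmod 2$ (digit of $t$ at position $j$ dominated by digit of $n+t$ at position $j+1$ in both cases) is a correct, if delicate, identity. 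What your approach buys is the stronger statement that each factor of \eqref{idofU} separately reduces to a common series mod $2$ --- information of the kind the paper exploits later for specific moduli --- but for the theorem as stated it is far more work than needed, and it does not scale as cleanly to the mod $3$ analogue, where the explicit coefficient formulas are less pleasant to compare.
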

\begin{proof}
This follows immediately from \eqref{defofU}.
\end{proof}
\begin{theorem}
\label{overarching3}
For any $t$, 
$$
U_t(-1, q) \equiv U_t(2, q) \pmod{3} \text{\ \ \ and\ \ \ } U_t(1, q) \equiv U_t(-2, q) \pmod{3}.
$$
\end{theorem}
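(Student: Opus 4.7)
The plan is to mimic the proof of Theorem \ref{overarching2} and reduce everything directly from the definition \eqref{defofU}. The key elementary fact is that the quadratic factor $1+aq^n+q^{2n}$ depends modulo $3$ only on the residue class of $a$ mod $3$. Since $-1\equiv 2\pmod 3$, we have
\[
1-q^n+q^{2n}\equiv 1+2q^n+q^{2n}\pmod 3,
\]
and since $1\equiv -2\pmod 3$, we have
\[
1+q^n+q^{2n}\equiv 1-2q^n+q^{2n}\pmod 3.
\]

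Each of these factors has constant term $1$, hence is a unit in $\mathbb{Z}[[q]]$. Reduction modulo $3$ is a ring homomorphism $\mathbb{Z}[[q]]\to\mathbb{F}_3[[q]]$, so it sends units to units and commutes with inversion. Consequently, for every fixed $n$,
\[
\frac{1}{1-q^n+q^{2n}}\equiv\frac{1}{1+2q^n+q^{2n}}\pmod 3,\qquad
\frac{1}{1+q^n+q^{2n}}\equiv\frac{1}{1-2q^n+q^{2n}}\pmod 3.
\]
Substituting these congruences into each summand of \eqref{defofU}, the term indexed by $1\leq n_1<\cdots<n_t$ for $a=-1$ agrees mod $3$ with the corresponding term for $a=2$, and likewise the term for $a=1$ agrees mod $3$ with the corresponding term for $a=-2$. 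Summing over all admissible tuples $(n_1,\ldots,n_t)$ yields the two claimed congruences.

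The only conceptual point that requires any care is the legitimacy of passing the mod-$3$ reduction through the infinite sum and through the reciprocals; this is routine because the sum in \eqref{defofU} converges in the $(q)$-adic topology of $\mathbb{Z}[[q]]$ (each coefficient of $q^n$ is a finite sum) and reduction modulo $3$ is continuous. There is no real obstacle: the result is a direct by-product of \eqref{defofU} together with the observation that $a\mapsto 1+aq^n+q^{2n}$ depends only on $a\bmod 3$.
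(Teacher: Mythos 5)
Your argument is correct and is exactly the one the paper intends: the authors simply state that the result ``follows immediately from \eqref{defofU},'' the point being precisely your observation that $1+aq^{n}+q^{2n}$ depends only on $a \bmod 3$, so the $a=-1$ and $a=2$ (resp.\ $a=1$ and $a=-2$) summands agree modulo $3$. Your write-up just makes explicit the routine justification for passing the reduction through reciprocals and the infinite sum.
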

\begin{proof}
This also follows immediately from \eqref{defofU}.
\end{proof}

Next, we focus our attention on past results which already appear in the literature.  The case $a=-2$ has been investigated in \cite{AAT1} and \cite{AOS}. 
In \cite{AAT1}, Amdeberhan, Andrews, and 
the second author discovered that, for all $N\geq 0$,
\begin{align*}
&MO(-2, 2; 5N+2)\equiv 0\pmod 5 \\
&MO(-2, 3; 7N+3)\equiv MO(-2, 3; 7N+5)\equiv 0\pmod 7.
\end{align*}
They also conjectured that, for all $N\geq 0$,
\begin{equation}\label{AATConjecture}
MO(-2, 10; 11N+7)\equiv 0\pmod{11}.
\end{equation}
This has since been confirmed by Amdeberhan, Ono, and Singh 
\cite[Theorem 1.5]{AOS}.
Morever, in \cite[Theorem 1.6]{AOS}, the authors showed that, for all $J\geq 0$ and $N\geq 0$,
\begin{align*}
&MO(-2, 3J+2; 3N+1)\equiv MO(-2, 3J+2; 3N+2)\equiv  0\pmod{3},\\
&MO(-2, 11J+10; 11N+7)\equiv  0\pmod{11},\\
&MO(-2, 17J+16; 17N+15)\equiv  0\pmod{17}.
\end{align*}
Amdeberhan, Andrews, and 
the second author also considered arithmetic results for other values of $a$ besides $-2$.  As an example, they proved \cite[Theorem 2.2]{AAT2}  that, for all $N\geq 0$ and for all $t$,
\begin{equation}
\label{1_t_3n2}
MO(1, t; 3N+2)=0.
\end{equation}
Moreover, by \cite[Theorem 2.2]{AAT2}, for all $N\geq 0$,
\begin{equation}
MO(1, 3; 3N+1) \equiv 0 \pmod{3}.\label{1_3_3n1}
\end{equation}
With this set of results in hand, we now move ahead to prove additional infinite families of congruences satisfied by these functions.  

\section{The Case $a=1$}

From the table given in the introduction we have that
$$\sum_{k\geq 0} b_k(1) q^k=\frac{1}{f_3}=\sum_{k\geq 0} p(k) q^{3k}$$
where $p(k)$ is the number of integer partitions of $k$. 
Therefore
\begin{align}\label{idp1}
MO(1, t; N)=\sum_{3k+\binom{n+1}{2}=N}p(k)\cdot c_n(1, t)
\end{align}
where
\begin{align*}
c_n(1, t)&=\sum_{k=0}^{n} \frac{(-1)^{n+k} (2n+1)\binom{n+k+1}{2k+1}}{n+k+1}\binom{k}{t}  3^{k-t}\\
&=\sum_{k=0}^{n}(-1)^{n+k} \left(\binom{n+k+1}{2k+1}+\binom{n+k}{2k+1}\right)\binom{k}{t} 3^{k-t}.
\end{align*}

We already noted that $MO(1, t; 3N+2) = 0$ identically for all $t$.  Next, we significantly extend the congruence in \eqref{1_3_3n1}.  

\begin{theorem} \label{MO3n+1v2} For all $J\geq 0$ and $N\geq 0$,
$$MO(1, 3J+3; 3N+1) \equiv MO(1, 3J+2; 3N+1) \equiv  0 \pmod 3.$$
\end{theorem}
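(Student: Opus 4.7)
The plan is to work with the product decomposition $U_t(1,q) = \frac{1}{f_3}\cdot \sum_{n\geq 0} c_n(1,t)\, q^{\binom{n+1}{2}}$ and exploit the fact that $1/f_3 = \sum_{k\geq 0}p(k)\, q^{3k}$ only contributes exponents divisible by $3$. In the convolution (3.1) for $MO(1,t;3N+1)$, a term $p(k)\cdot c_n(1,t)$ survives only when $3k+\binom{n+1}{2}=3N+1$, which forces $\binom{n+1}{2}\equiv 1\pmod 3$. Checking the three residue classes modulo $3$ shows that this is equivalent to $n\equiv 1\pmod 3$. So everything reduces to proving that $c_n(1,t)\equiv 0 \pmod 3$ whenever $n\equiv 1\pmod 3$ and $t\in\{3J+2,\,3J+3\}$.

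For this I would use the closed form $c_n(1,t) = (-1)^{n-t}\,[z^n]\,\dfrac{z^t(1+z)}{(1+z+z^2)^{t+1}}$ from the table in the Introduction, combined with the Freshman's Dream modulo $3$, namely $(1+z+z^2)^3 \equiv 1+z^3+z^6 \pmod 3$. When $t=3J+2$, the exponent $t+1=3(J+1)$ is a multiple of $3$, so modulo $3$ the rational function simplifies to $\dfrac{z^{3J+2}(1+z)}{(1+z^3+z^6)^{J+1}}$. The reciprocal of $(1+z^3+z^6)^{J+1}$ is a power series in $z^3$, and multiplying by $z^{3J+2}(1+z)$ yields only monomials whose exponents are $\equiv 2$ or $\equiv 0\pmod 3$; hence $[z^n]=0$ whenever $n\equiv 1\pmod 3$.

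When $t=3J+3$, we have $t+1=3(J+1)+1$, so modulo $3$ the denominator factors as $(1+z+z^2)(1+z^3+z^6)^{J+1}$. Here the useful identity is $\dfrac{1+z}{1+z+z^2} = \dfrac{1-z^2}{1-z^3}$, which rewrites the generating function modulo $3$ as $\dfrac{z^{3J+3}(1-z^2)}{(1-z^3)(1+z^3+z^6)^{J+1}}$. Since both $1/(1-z^3)$ and $1/(1+z^3+z^6)^{J+1}$ are power series in $z^3$, and $z^{3J+3}(1-z^2)$ contributes only exponents $\equiv 0$ or $\equiv 2\pmod 3$, once again $[z^n]=0$ whenever $n\equiv 1\pmod 3$.

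The main (and rather mild) obstacle is noticing the rewrite $\dfrac{1+z}{1+z+z^2} = \dfrac{1-z^2}{1-z^3}$, which makes the $t=3J+3$ case exactly parallel to the $t=3J+2$ case; with that in hand, both congruences follow from the same modular simplification and require no further calculation.
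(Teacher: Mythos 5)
Your proof is correct, but it takes a genuinely different route from the paper's. Both arguments begin with the same reduction: since $1/f_3$ contributes only exponents divisible by $3$, the constraint $3k+\binom{n+1}{2}=3N+1$ forces $n\equiv 1\pmod 3$, so it suffices to show $c_n(1,t)\equiv 0\pmod 3$ for such $n$. From there the paper works with the explicit binomial sum $c_n(1,t)=\sum_{k}(-1)^{n+k}\bigl(\binom{n+k+1}{2k+1}+\binom{n+k}{2k+1}\bigr)\binom{k}{t}3^{k-t}$: every summand with $k>t$ is killed by the factor $3^{k-t}$, and the $k=t$ term is rewritten as $(-1)^{n+t}\frac{3(2m+1)}{2t+1}\binom{3m+t+1}{2t}$, which is divisible by $3$ precisely because $2t+1\not\equiv 0\pmod 3$ when $t\equiv 0,2\pmod 3$. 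You instead reduce the rational generating function $z^t(1+z)/(1+z+z^2)^{t+1}$ modulo $3$ via the Frobenius identity $(1+z+z^2)^3\equiv 1+z^3+z^6$ and the rewrite $\frac{1+z}{1+z+z^2}=\frac{1-z^2}{1-z^3}$, concluding that no exponent $\equiv 1\pmod 3$ survives; all the series involved have integer coefficients, so the reduction is legitimate. Your method is cleaner and is exactly the technique the paper itself deploys for Theorems \ref{MOm14} and \ref{MOm1m5}; it also makes transparent why $t\equiv 1\pmod 3$ fails (the denominator then contributes $(1+z+z^2)^2$, which does not simplify the same way). The paper's term-by-term analysis, on the other hand, exposes the finer $3$-adic structure coming from $3^{k-t}$, which is what powers the strengthening to modulus $9$ in Theorem \ref{MO3n+1v3}; your generating-function reduction would not extend to that refinement without substantially more work.
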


\begin{proof} Since by \eqref{idp1},
$$MO(1, t; 3N+1)=\sum_{3k+\binom{n+1}2=3N+1}p(k)\cdot c_n(1, t),$$
it suffices to check the case when
$\binom{n+1}{2}\equiv 1\pmod3$, that is $n=3m+1$ for some integer $m$, and show that 
$$c_{3m+1}(1, t)=\sum_{k=t}^{3m+1} (-1)^{3m+k+1}\left(\binom{3m+k+2}{2k+1}+\binom{3m+k+1}{2k+1}\right)\binom{k}{t}  3^{k-t}$$
is divisible by $3$.
Actually, each summand is immediately divisible by $3$ due to the term $3^{k-t}$ when $k>t$. For $k=t$, replacing this value to compute the corresponding term, we obtain
\begin{align*}
(-1)^{n+t}&\left(\binom{3m+t+2}{2t+1}+\binom{3m+t+1}{2t+1}\right)\binom{t}{t}  3^{t-t}\\
&= (-1)^{n+t}\frac{3(2m+1)}{2t+1}\binom{3m+t+1}{2t} 
\end{align*}
which is divisible by $3$ as long as $2t+1\not \equiv 0\pmod 3$, that is $t \not \equiv 1 \pmod{3}$. Our result follows.
\end{proof}

The next result provides a natural strengthening of Theorem \ref{MO3n+1v2} by increasing the modulus from 3 to 9.
 
\begin{theorem} \label{MO3n+1v3} For all $J\geq 0$ and $N\geq 0$,
$$MO(1, 3J+2; 3N+1)\equiv 0 \pmod{9}.$$
\end{theorem}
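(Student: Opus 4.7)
The plan is to refine the argument used for Theorem \ref{MO3n+1v2}. As in that proof, it suffices to show that $c_{3m+1}(1, 3J+2) \equiv 0 \pmod 9$ for every $m \ge 0$; the case $m \le J$ is vacuous since then $3m+1 < 3J+2$ and every term in the sum defining $c_{3m+1}(1, 3J+2)$ contains the vanishing binomial $\binom{k}{3J+2}$. Writing $t:=3J+2$, the idea is to split
$$c_{3m+1}(1, t) = \sum_{k=t}^{3m+1} (-1)^{3m+k+1}\left(\binom{3m+k+2}{2k+1}+\binom{3m+k+1}{2k+1}\right)\binom{k}{t}\, 3^{k-t}$$
into three ranges according to the value of $k-t$: the terms with $k \ge t+2$, the single term $k = t+1$, and the single term $k = t$.

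The first two ranges are immediate. For $k \ge t+2$, the prefactor $3^{k-t} \ge 9$ makes each term divisible by $9$. For $k = t+1 = 3J+3$, the binomial coefficient $\binom{3J+3}{3J+2} = 3(J+1)$ combines with the lone factor $3^{k-t}=3$ to give $9(J+1)$, which is again divisible by $9$.

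The main obstacle is the $k=t$ term. Applying the same Pascal-type manipulation as in the proof of Theorem \ref{MO3n+1v2}, this term simplifies (up to sign) to
$$\frac{3(2m+1)}{2t+1}\binom{3m+t+1}{2t} \;=\; \frac{3(2m+1)}{6J+5}\binom{3(m+J+1)}{6J+4},$$
which is a genuine integer, being equal to $\binom{3m+t+2}{2t+1}+\binom{3m+t+1}{2t+1}$. Since $\gcd(6J+5,3)=1$, the needed second factor of $3$ must come from the binomial coefficient $\binom{3(m+J+1)}{6J+4}$. I expect to supply it by Lucas' theorem modulo $3$: the integer $3(m+J+1)$ has last base-$3$ digit $0$, whereas $6J+4 = 3(2J+1)+1$ has last base-$3$ digit $1$, so the product of digit-binomials contains the factor $\binom{0}{1} = 0$. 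This forces $\binom{3(m+J+1)}{6J+4} \equiv 0 \pmod 3$, which combines with the explicit $3$ in the numerator to yield divisibility by $9$. Together with the two trivial ranges, this completes the proof.
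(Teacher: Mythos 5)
Your proposal is correct and follows essentially the same route as the paper: the same reduction to $c_{3m+1}(1,3J+2)\equiv 0\pmod 9$, the same three-way split at $k\ge t+2$, $k=t+1$, $k=t$, and the same Pascal-type simplification of the $k=t$ term. The only cosmetic difference is that you certify $3\mid\binom{3(m+J+1)}{6J+4}$ via Lucas' theorem, whereas the paper extracts the factor of $3$ by the absorption identity $\binom{3(m+J+1)}{6J+4}=\frac{3(m+J+1)}{6J+4}\binom{3(m+J+1)-1}{6J+3}$; both are valid since $6J+4$ and $6J+5$ are coprime to $3$.
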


\begin{proof} 
As in the previous proof,
by \eqref{idp1}, it suffices to show that
$$c_{3m+1}(1, t)=\sum_{k=t}^{3m+1} (-1)^{n+k}\left(\binom{3m+k+2}{2k+1}+\binom{3m+k+1}{2k+1}\right)\binom{k}{t}  3^{k-t}$$
is divisible by $9$ when $t=3J+2$.

Each summand
\begin{align*}
(-1)^{n+k}&\left(\binom{3m+k+2}{2k+1}+\binom{3m+k+1}{2k+1}\right)\binom{k}{t}  3^{k-t}\\
&= (-1)^{n+k}\frac{3(2m+1)}{2k+1}\binom{3m+k+1}{2k} \binom{k}{t}  3^{k-t}
\end{align*}
is immediately divisible by $9$ due to the term $3^{k-t}$ for $k>t+1$. Let us consider the cases $k=t$ and $k=t+1$.

\noindent If $k=t=3J+2$ then the term is
\begin{align*}
(-1)^{n+k}&\frac{3(2m+1)}{6J+5}\binom{3(m+J+1)}{6J+4}\\
&=(-1)^{n+k}\frac{9(2m+1)(m+J+1)}{(6J+5)(6J+4)}\binom{3(m+J+1)-1}{6J+3}\equiv 0 \pmod{9}.
\end{align*}

\noindent If $k=t+1=3J+3$ then the term is
\begin{align*}
(-1)^{n+k}&\frac{3(2m+1)}{6J+7}\binom{3m+3J+4}{6J+6}(3J+3)\cdot 3\equiv 0 \pmod{9}.
\end{align*}

Combining all of the above facts yields our result. 
\end{proof}

The next result is related to the well-known congruences of Ramanujan \cite{R21}:   For all $N\geq 0$, 
\begin{align*}
p(5N+4) &\equiv 0 \pmod{5},\\
p(7N+5) &\equiv 0 \pmod{7},\\
p(11N+6) &\equiv 0 \pmod{11}.
\end{align*}

\begin{theorem} \label{MO5n+2}  For all $J\geq 0$ and all $N\geq 0$, 
\begin{align*}
MO(1, 5J+4; 5N+2) &\equiv 0 \pmod{5},\\
MO(1, 7J+6; 7N+1) &\equiv 0 \pmod{7},\\
MO(1, 11J+10; 11N+7) &\equiv 0 \pmod{11}.
\end{align*}
\end{theorem}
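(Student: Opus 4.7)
The approach I would take is to analyze $c_n(1,\, pJ+p-1) \bmod p$ using the generating-function form
$$c_n(1, t) = (-1)^{n-t}\,[z^n]\frac{z^t(1+z)}{(1+z+z^2)^{t+1}}$$
given in the introduction. The critical observation is that when $t = pJ+p-1$, the exponent $t+1 = p(J+1)$ is a multiple of $p$. By Fermat/Frobenius applied in $\mathbb{F}_p[z]$, we have $(1+z+z^2)^p \equiv 1+z^p+z^{2p} \pmod p$, and hence
$$(1+z+z^2)^{t+1} \equiv (1+z^p+z^{2p})^{J+1} \pmod p,$$
which is a polynomial in $z^p$. Its reciprocal is therefore a power series in $z^p$ modulo $p$, so
$$\frac{z^t(1+z)}{(1+z+z^2)^{t+1}} \equiv z^{p-1}(1+z)\,g(z^p) \pmod p$$
for some $g(w)\in\mathbb{F}_p[[w]]$. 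Reading off coefficients, this forces $c_n(1, pJ+p-1) \equiv 0 \pmod p$ whenever $n\not\equiv 0,\,p-1 \pmod p$.

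The second step inserts this into the convolution
$$MO(1, t; pN+B) = \sum_{3k+\binom{n+1}{2}=pN+B} p(k)\, c_n(1, t)$$
coming from \eqref{idp1}. The vanishing just established kills every summand except those with $n\equiv 0$ or $p-1 \pmod p$; for such $n$ we have $p \mid n(n+1)/2 = \binom{n+1}{2}$, so the summation condition reduces to $3k\equiv B \pmod p$. A direct check shows this gives $k\equiv 4\pmod 5$ when $(p,B)=(5,2)$, $k\equiv 5\pmod 7$ when $(p,B)=(7,1)$, and $k\equiv 6\pmod{11}$ when $(p,B)=(11,7)$, which are precisely the residue classes on which Ramanujan's classical partition congruences make $p(k)\equiv 0 \pmod p$. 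Thus every surviving summand also vanishes modulo $p$ and the three congruences follow uniformly.

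I do not anticipate a genuine obstacle: the work is concentrated in the Frobenius simplification of $(1+z+z^2)^{t+1}$, which shrinks the support of $c_n(1, pJ+p-1) \bmod p$ down to two residue classes, and it is an arithmetic coincidence (engineered by the choice of $B$) that the induced residue of $k$ lands exactly on the Ramanujan class. Compared with the elementary $3$-adic analysis used in Theorems \ref{MO3n+1v2} and \ref{MO3n+1v3}, this generating-function reduction is more conceptual and treats all three primes in a single sweep.
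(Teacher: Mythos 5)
Your proposal is correct, and its overall architecture is the same as the paper's: show that $c_n(1,pJ+p-1)\equiv 0\pmod p$ unless $n\equiv 0$ or $p-1\pmod p$, deduce that $p\mid\binom{n+1}{2}$ for the surviving terms, and then observe that the condition $3k\equiv B\pmod p$ lands $k$ exactly in the Ramanujan residue class ($k\equiv 4\pmod 5$, $k\equiv 5\pmod 7$, $k\equiv 6\pmod{11}$ — all three of your computations check out). The difference lies in how the key support statement for $c_n$ is established. The paper works with the explicit double-binomial formula for $c_n(1,t)$ and applies Lucas's theorem twice: first to force $\binom{k}{t}\equiv 0\pmod p$ unless $k\equiv p-1\pmod p$, and then to force $\binom{n+k+1}{2k+1}+\binom{n+k}{2k+1}\equiv 0\pmod p$ unless $n\equiv 0,p-1\pmod p$. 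You instead use the rational generating function $c_n(1,t)=(-1)^{n-t}[z^n]\,z^t(1+z)/(1+z+z^2)^{t+1}$ and the Frobenius identity $(1+z+z^2)^{p(J+1)}\equiv(1+z^p+z^{2p})^{J+1}\pmod p$, so that the whole expression becomes $(z^{pJ+p-1}+z^{pJ+p})\,g(z^p)$ with exponents confined to the classes $p-1$ and $0$ modulo $p$. Your route is the more uniform one — it handles all three primes in one stroke and avoids the case analysis on binomial coefficients — and it is in fact the same technique the paper itself deploys elsewhere (Theorems \ref{MOm14} and \ref{MOm1m5}) for the $a=-1$ case; the paper's Lucas-based argument has the minor advantage of staying entirely within the closed-form expression for $c_n$ used throughout Section 3. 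Either way the proof is complete.
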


\begin{proof} We prove the first congruence. The other two follow in a similar way. 
If $t=5J+4$ then, by Lucas's theorem (see for instance \cite{Fine}), $\binom{k}{t}\equiv 0\pmod 5$ when $k\not \equiv 4\pmod 5$. Similarly, if $k\equiv 4\pmod 5$ then 
$$\binom{n+k+1}{2k+1}+\binom{n+k}{2k+1}\equiv 0\pmod 5$$
when $n\not \equiv 0,4\pmod 5$. Hence, if $c_n(1, 5J+4)$ is not divisible by $5$ then $n\equiv 0$ or  $n\equiv 4 \pmod{5}$ 
which implies that $\binom{n+1}2$ is divisible by $5$.
Thus by \eqref{idp1},
$$MO(1, 5J+4; 5N+2)=\sum_{3k+\binom{n+1}2=5N+2}p(k)\cdot c_n(1, 5J+4) \equiv 0 \pmod{5}$$
because when $\binom{n+1}2$ is divisible by $5$ then $3k\equiv 2\pmod{5}$, that is $k\equiv 4\pmod{5}$. In that case, $p(k)\equiv 0\pmod{5}$ by the first of Ramanujan's congruences recalled above.
\end{proof}

\section{The Case $a=-1$}

From the table given in the introduction we have that
$$\sum_{k\geq 0} b_k(-1) q^k=\frac{f_2 f_3}{f_1^2 f_6}=\sum_{k\geq 0} \B3(k) q^{k}$$
where $\B3(k)$ is the number of almost $3$-regular overpartitions of $k$  \cite{Ballantine-Merca}. 
Therefore
\begin{align}\label{idm1}
MO(-1, t; N)=\sum_{k+\binom{n+1}{2}=N}\B3(k)\cdot c_n(-1, t)
\end{align}
where
$$
c_n(-1, t)=(-1)^{n-t}[z^n]\frac{z^t(1+z)}{(1-z+z^2)^{t+1}}=\sum_{k=0}^n \frac{(-1)^{n+k}(2n+1)\binom{n+k+1}{2k+1}}{n+k+1}\binom{k}{t}.
$$
It is shown in \cite[Corollary 1.5]{Ballantine-Merca}, that for all $k\geq 0$,
$$\B3(3k+1)\equiv 0 \pmod{2}\qquad\text{and}\qquad \B3(3k+2)\equiv 0 \pmod{4}.$$
Moreover, by the proof given in the Appendix at the end of the paper, for all $k\geq 0$,
$$\B3(15k+7)\equiv 0\pmod{5}.$$

Equipped with these facts, we now proceed to prove several new arithmetic properties satisfied in the case when $a=-1$. 

\begin{theorem} \label{MOm1}  For all $t\geq 1$ and all $N\geq 0$,
$$MO(-1, t; 3N+2) \equiv 0 \pmod{2}.$$
\end{theorem}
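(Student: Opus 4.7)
The plan is to use the product decomposition \eqref{idm1}, combined with the parity information about $\overline{B}_3$ quoted just before the theorem, to show that every summand is already even. Recall that $\overline{B}_3(3k+1)\equiv 0 \pmod{2}$ and $\overline{B}_3(3k+2)\equiv 0 \pmod{4}$, so $\overline{B}_3(k)$ is even whenever $k\not\equiv 0\pmod 3$. In the decomposition
$$MO(-1, t; 3N+2)=\sum_{k+\binom{n+1}{2}=3N+2}\overline{B}_3(k)\cdot c_n(-1, t),$$
the only terms that could be odd are therefore those with $k\equiv 0\pmod 3$, and the whole result will follow if those can be excluded.

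To do so, I would tabulate $\binom{n+1}{2}\bmod 3$ across the residue classes of $n$. A direct calculation gives $\binom{n+1}{2}\equiv 0\pmod 3$ when $n\equiv 0,2\pmod 3$ and $\binom{n+1}{2}\equiv 1\pmod 3$ when $n\equiv 1\pmod 3$; in particular, $\binom{n+1}{2}$ is \emph{never} congruent to $2$ modulo $3$. Combined with the constraint $k+\binom{n+1}{2}\equiv 2\pmod 3$, this forces $k\equiv 1$ or $k\equiv 2\pmod 3$, and in either case $\overline{B}_3(k)\equiv 0\pmod 2$. Since $c_n(-1,t)$ is an integer (it is a coefficient of a power series in $\mathbb{Z}[[z]]$), every summand is even and the claimed congruence drops out.

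I do not anticipate any serious obstacle. No structural information on $c_n(-1,t)$ beyond integrality is needed, and the argument is uniform in $t\ge 1$; the hypothesis $t\ge 1$ is simply inherited from the definition of $U_t(a,q)$ and plays no further role. The only item worth double checking is the elementary residue table for the triangular numbers $\binom{n+1}{2}$ modulo $3$, which is immediate.
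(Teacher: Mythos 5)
Your proof is correct, but it is not the route the paper takes for this particular theorem. The paper disposes of it in one line: by \eqref{1_t_3n2} one has $MO(1,t;3N+2)=0$ exactly, and by Theorem \ref{overarching2} one has $U_t(-1,q)\equiv U_t(1,q)\pmod 2$, so the congruence is immediate. You instead argue directly from the decomposition \eqref{idm1}: since $\binom{n+1}{2}\bmod 3$ only takes the values $0$ and $1$, the constraint $k+\binom{n+1}{2}\equiv 2\pmod 3$ forces $k\equiv 1$ or $2\pmod 3$, where the Ballantine--Merca congruences make $\B3(k)$ even, and integrality of $c_n(-1,t)$ (clear from its definition as a coefficient of a power series with integer coefficients) finishes the job. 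Your residue table for the triangular numbers is right, so the argument is sound. What the paper's proof buys is brevity and the use of an exact vanishing rather than a congruence; what your argument buys is self-containedness and, more importantly, it is exactly the skeleton that the paper then fleshes out to prove the mod-$4$ strengthening in Theorem \ref{MOm14}, where the extra factor of $2$ has to be extracted from $c_n(-1,t)$ in the case $n\equiv 1\pmod 3$. So your approach is the ``right'' one in the sense that it scales, even though for the mod-$2$ statement itself the paper's two-line deduction is shorter.
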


\begin{proof} 
This follows from \eqref{1_t_3n2} and Theorem \ref{overarching2}. 
\end{proof}

The above result can be extended modulo $4$ for 
an infinite family of values of $t$.

\begin{theorem} \label{MOm14}  For all $J\geq 2$ and all $N\geq 0$,
$$MO(-1, 2^J-2; 3N+2) \equiv 0 \pmod{4}.$$
\end{theorem}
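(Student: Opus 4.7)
The plan is to expand $MO(-1, 2^J-2; 3N+2)$ via \eqref{idm1} and analyze each term $\B3(k)\cdot c_n(-1, 2^J-2)$ indexed by pairs $(k,n)$ satisfying $k+\binom{n+1}{2}=3N+2$, splitting according to the residue of $k$ modulo $3$. Since the triangular numbers $\binom{n+1}{2}$ only take the residues $0$ and $1$ modulo $3$, the constraint $k+\binom{n+1}{2}\equiv 2\pmod{3}$ forces either $k\equiv 2\pmod{3}$ (when $\binom{n+1}{2}\equiv 0\pmod{3}$), or $k\equiv 1\pmod{3}$ together with $n\equiv 1\pmod{3}$ (when $\binom{n+1}{2}\equiv 1\pmod{3}$). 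Each of these two cases must be shown to contribute $0$ modulo $4$.

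The first case is handled immediately by the congruence $\B3(3m+2)\equiv 0\pmod{4}$ recalled from \cite{Ballantine-Merca}. The second case is more delicate: the fact $\B3(3m+1)\equiv 0\pmod{2}$ only supplies one factor of $2$, so the remaining factor must come from the coefficient $c_n(-1,2^J-2)$. Hence the main technical step is to show that $c_n(-1, 2^J-2)$ is even whenever $n\equiv 1\pmod{3}$.

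To establish this, I would work directly with the closed form $c_n(-1,t)=(-1)^{n-t}[z^n]\frac{z^t(1+z)}{(1-z+z^2)^{t+1}}$. Modulo $2$ one has $1-z+z^2\equiv 1+z+z^2=\frac{1+z^3}{1+z}$, which rewrites the generating function as $\frac{z^t(1+z)^{t+2}}{(1+z^3)^{t+1}}\pmod{2}$. Specialising to $t=2^J-2$ makes the exponent $t+2=2^J$, and the freshman's dream gives $(1+z)^{2^J}\equiv 1+z^{2^J}\pmod{2}$. Since $\frac{1}{(1+z^3)^{2^J-1}}$ expands only in powers of $z^3$, the nonzero monomials of $\frac{z^{2^J-2}(1+z^{2^J})}{(1+z^3)^{2^J-1}}$ carry exponents congruent either to $2^J-2$ or to $2^{J+1}-2$ modulo $3$. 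A short check (using $2^J\bmod 3\in\{1,2\}$ according to the parity of $J$) shows these residues lie in $\{0,2\}$, never $1$, so $c_n(-1,2^J-2)$ is even whenever $n\equiv 1\pmod{3}$.

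I do not anticipate a serious obstacle; the only subtlety is the residue bookkeeping for the exponents $2^J-2$ and $2^{J+1}-2$ across both parities of $J$. Beyond the factorisation $1+z+z^2=(1+z^3)/(1+z)$ in $\mathbb{F}_2[z]$ and the freshman's dream $(1+z)^{2^J}\equiv 1+z^{2^J}\pmod{2}$, no heavier machinery is required.
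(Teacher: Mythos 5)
Your proposal is correct and follows essentially the same route as the paper: split the sum from \eqref{idm1} according to $k \bmod 3$, invoke $\B3(3m+2)\equiv 0\pmod 4$ and $\B3(3m+1)\equiv 0\pmod 2$, and supply the missing factor of $2$ by showing $c_n(-1,2^J-2)$ is even for $n\equiv 1\pmod 3$ via the mod-$2$ identity $1-z+z^2\equiv (1+z^3)/(1+z)$ and the freshman's dream. The only cosmetic difference is that the paper multiplies through by an extra $1+z^3$ to make the denominator $(1+z^3)^{2^J}\equiv 1+z^{3\cdot 2^J}$, whereas you observe directly that $(1+z^3)^{-(2^J-1)}$ is a series in $z^3$; both give the same exponent residues $\{0,2\}\pmod 3$.
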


\begin{proof} By  \eqref{idm1}, 
$$MO(-1, 2^J-2; 3N+2)=\sum_{k+\binom{n+1}{2}=3N+2}\B3(k)\cdot c_n(-1, 2^J-2)\equiv 0 \pmod{4}$$
because

1) if $n\equiv 0,2 \pmod{3}$ then $\binom{n+1}{2}\equiv 0\pmod{3}$ which implies that $k\equiv 2\pmod{3}$ and $\B3(k)\equiv 0\pmod{4}$. 

2) if $n\equiv 1 \pmod{3}$ then $\binom{n+1}{2}\equiv 1\pmod{3}$ which implies that $k\equiv 1\pmod{3}$ and $\B3(k)\equiv 0\pmod{2}$  and $c_n(-1, 2^J-2)\equiv 0\pmod{2}$.

Indeed, the last congruence holds due to the fact that
\begin{align*}
 \frac{z^{2^J-2}(1+z)}{(1-z+z^2)^{2^J-1}}&=\frac{z^{2^J-2}(1+z)^{2^J}(1+z^3)}{(1+z^3)^{2^J}}\\
 &\equiv \frac{z^{2^J-2}(1+z^{2^J})(1+z^3)}{1+z^{3\cdot 2^J}} \pmod{2}\\
 &\equiv \frac{z^{2^J-2}+z^{2^j+1}+z^{2^{J+1}-2}+z^{2^{J+1}+1}}{1+z^{3\cdot 2^J}} \pmod{2}.
 \end{align*}
Thus, since in the numerator each exponent is not congruent to $1 \pmod{3}$, 
we may conclude that if $n\equiv 1\pmod{3}$ then $c_n(-1, 2^J-2)\equiv 0\pmod{2}$.
\end{proof}

We next prove an isolated congruence modulo 5 which was rather unexpected.

\begin{theorem} \label{MOm1m5}  For all $N\geq 0$,
$$MO(-1, 3; 15N+13) \equiv 0 \pmod{5}.$$
\end{theorem}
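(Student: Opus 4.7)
The plan is to follow the template established in the proof of Theorem \ref{MOm14}: reduce $c_n(-1,3)$ modulo $5$ via a Frobenius-type computation on its generating function, pinpoint the residues of $n\bmod 15$ on which the coefficient can survive, and then use the constraint $k+\binom{n+1}{2}=15N+13$ to force $k\equiv 7\pmod{15}$ in every surviving term. At that point the Appendix congruence $\B3(15m+7)\equiv 0\pmod 5$, recalled earlier in this section, finishes the argument via \eqref{idm1}.

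For the first step, since $5$ is prime, the freshman's dream gives $(1-z+z^2)^5\equiv 1-z^5+z^{10}\pmod 5$. Dividing by $1-z+z^2$ and using the identity $(1+z)(1-z+z^2)=1+z^3$ yields
$$\frac{z^3(1+z)}{(1-z+z^2)^4}\equiv \frac{z^3(1+z)(1-z+z^2)}{1-z^5+z^{10}}=\frac{z^3+z^6}{1-z^5+z^{10}}\pmod 5.$$
Multiplying numerator and denominator by $1+z^5$ and using the factorisation $1+z^{15}=(1+z^5)(1-z^5+z^{10})$, the right-hand side equals
$$\frac{z^3+z^6+z^8+z^{11}}{1+z^{15}}\equiv\sum_{m\ge 0}(-1)^m\bigl(z^{15m+3}+z^{15m+6}+z^{15m+8}+z^{15m+11}\bigr)\pmod 5,$$
so $c_n(-1,3)\equiv 0\pmod 5$ unless $n\equiv 3,\,6,\,8,\,11\pmod{15}$.

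Second, a direct check (for instance, by computing $\binom{n+1}{2}$ at $n=3,6,8,11$ and noting that adding $15$ to $n$ changes $\binom{n+1}{2}$ by a multiple of $15$) gives $\binom{n+1}{2}\equiv 6\pmod{15}$ in each of these four residues. Substituting into \eqref{idm1}, every summand of
$$MO(-1,3;15N+13)=\sum_{k+\binom{n+1}{2}=15N+13}\B3(k)\,c_n(-1,3)$$
with $c_n(-1,3)\not\equiv 0\pmod 5$ then forces $k\equiv 13-6\equiv 7\pmod{15}$; for such $k$ the Appendix congruence gives $\B3(k)\equiv 0\pmod 5$, so the whole sum vanishes modulo $5$.

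The main obstacle is the Frobenius reduction in the first step: its key virtue is that the four surviving exponents $3,6,8,11\pmod{15}$ in the numerator of the reduced generating function happen to align with the single triangular-number residue $\binom{n+1}{2}\equiv 6\pmod{15}$, so that one known $\B3$ congruence suffices. This coincidence is exactly what allows the isolated modulus-$5$ statement to hold, and it is not transparent until the Frobenius computation has been carried out.
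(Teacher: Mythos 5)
Your proposal is correct and follows essentially the same route as the paper: the same Frobenius reduction of $\frac{z^3(1+z)}{(1-z+z^2)^4}$ modulo $5$ to a numerator supported on exponents $3,6,8,11 \pmod{15}$, the same observation that $\binom{n+1}{2}\equiv 6\pmod{15}$ on those residues, and the same appeal to $\B3(15m+7)\equiv 0\pmod 5$ via \eqref{idm1}. (Your denominator $1+z^{15}$ is in fact the correct factorisation; the paper writes $1-z^{15}$, which is immaterial since only the numerator's exponents modulo $15$ are used.)
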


\begin{proof} We have that
$$c_n(-1, 3)=(-1)^{n-1}[z^n]\frac{z^3(1+z)}{(1-z+z^2)^{4}}.$$
Moreover
\begin{align*}
 \frac{z^3(1+z)}{(1-z+z^2)^{4}}&=\frac{z^3(1+z)(1-z+z^2)}{(1-z+z^2)^{5}}
 \equiv \frac{z^3(1+z^3)}{1-z^5+z^{10}}\\
 &=\frac{z^3(1+z^3)(1+z^5)}{1-z^{15}}
 \equiv \frac{z^3+z^8+z^6+z^{11}}{1-z^{15}} \pmod{5}.
 \end{align*}
Thus, it follows that if $c_n(-1, 3)\not\equiv 0\pmod{5}$ then $n\equiv 3,6,8,11\pmod{15}$ and therefore $\binom{n+1}{2}\equiv 6\pmod{15}$.
Hence, by  \eqref{idm1},
$$MO(-1, 3; 15N+13)=\sum_{k+\binom{n+1}{2}=15N+13}\B3(k)\cdot c_n(-1, 3)\equiv 0 \pmod{5}$$
because $\B3(k)\equiv 0\pmod{5}$ when $k\equiv 13-6=7 \pmod{15}$. 
\end{proof}

We close this section by noting the following family of congruences modulo 3.  
\begin{theorem} \label{MOneg1mod3}  For all $J\geq 0$ and all $N\geq 0$,
\begin{align*}
MO(-1, 9J+4; 27N+9) &\equiv 0 \pmod{3},\\
MO(-1, 9J+7; 27N) &\equiv 0 \pmod{3}.
\end{align*}
\end{theorem}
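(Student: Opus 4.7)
The plan is to first establish the auxiliary congruence $\B3(27j+26) \equiv 0 \pmod 3$ for all $j \geq 0$, and then combine it with a mod-$3$ residue analysis of $c_n(-1,t)$ to conclude both parts. For the auxiliary claim, the congruences $f_1^3 \equiv f_3$ and $f_2^3 \equiv f_6 \pmod 3$ reduce $\sum_{k\geq 0}\B3(k)q^k$ to $f_1/f_2^2 = 1/\psi(q) \pmod 3$, where $\psi(q) := \sum_{n\geq 0}q^{T_n}$ with $T_n = \binom{n+1}{2}$. Since $T_n \pmod 3 \in \{0,1\}$ and $T_{3k+1} = 9T_k+1$, we have the 3-dissection $\psi(q) = \chi(q^3) + q\psi(q^9)$ where $\chi(q^3) := \sum_{n\not\equiv 1\,(\mathrm{mod}\,3)} q^{T_n}$. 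The sum-of-cubes factorization, together with the self-similarity $\chi(q^9)+q^3\psi(q^{27}) = \psi(q^3) \pmod 3$, then gives
\[
\frac{1}{\psi(q)} \equiv \frac{\chi(q^3)^2 - q\chi(q^3)\psi(q^9) + q^2\psi(q^9)^2}{\psi(q^3)} \pmod{3}.
\]
For $k \equiv 2 \pmod 3$ only the $q^2$-piece contributes; substituting $Q = q^3$ and using $1/\psi(Q) \equiv \psi(Q)^2/\psi(Q^3) \pmod 3$ reduces the claim to $[Q^{9N+8}]\psi(Q)^2\psi(Q^3) \equiv 0 \pmod 3$. The left side counts triples $(a,b,c)$ with $T_a+T_b+3T_c = 9N+8$; since $T_n \pmod 9 \in \{0,1,3,6\}$ and $3T_c \pmod 9 \in \{0,3\}$, a brief enumeration shows that $T_a+T_b+3T_c \pmod 9 \in \{0,1,\ldots,7\}$, and the count is exactly $0$.

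For the first congruence, rewrite $c_n(-1,t) = (-1)^{n-t}[z^{n-t}](1+z)^{t+2}/(1+z^3)^{t+1}$ via $(1+z)(1-z+z^2) = 1+z^3$. For $t = 9J+4$, two applications of $(1+z)^3 \equiv 1+z^3 \pmod 3$ reduce this to $(1+z^9)^{-(2J+1)} \pmod 3$, forcing $n \equiv 4 \pmod 9$. Writing $n = 9m+4$ yields $T_n = 81T_m + 10$, so $k = 27N+9-T_n \equiv 26 \pmod{27}$, and the auxiliary result closes this case.

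For the second congruence the same manipulation with $t = 9J+7$ gives $(1+z)^{9J+9}/(1+z^3)^{9J+8} \equiv (1+z^3)^{-(6J+5)} \pmod 3$, forcing $n \equiv 1 \pmod 3$. Writing $n = 3m+1$ (so $T_n = 9T_m+1$), I would split on $T_m \pmod 3$. If $T_m \equiv 0 \pmod 3$, then $T_n \equiv 1 \pmod{27}$, so $k \equiv 26 \pmod{27}$ and the auxiliary result again applies. If $T_m \equiv 1 \pmod 3$ (equivalently $m = 3\ell+1$ with $\ell \geq J+1$), direct coefficient extraction gives $c_n(-1,9J+7) \equiv \binom{3(2J+1+u)}{3u-1} \pmod 3$ with $u = \ell-J \geq 1$, which vanishes by Lucas's theorem since the numerator has last base-$3$ digit $0$ while the denominator has last base-$3$ digit $2$. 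The main obstacle will be the auxiliary claim, and more specifically the residue enumeration showing that $\{0,1,3,6\}+\{0,1,3,6\}+\{0,3\} \not\ni 8 \pmod 9$, which is what makes the whole argument work.
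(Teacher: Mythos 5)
Your proof is correct, but it takes a genuinely different route from the paper's. The paper disposes of this theorem in one line: by Theorem \ref{overarching3}, $U_t(-1,q)\equiv U_t(2,q)\pmod 3$ (since $1-z+z^2\equiv(1+z)^2$ and $f_2f_3/(f_1^2f_6)\equiv f_1/f_2^2\pmod 3$), so the statement is inherited from Theorem \ref{MO2}, whose proof in turn rests on the cited Hirschhorn--Sellers congruence $pod(27k+26)\equiv 0\pmod 3$. You instead work entirely inside the $a=-1$ decomposition \eqref{idm1}: you re-prove the needed prefactor congruence $\B3(27j+26)\equiv 0\pmod 3$ from scratch via the $3$-dissection of $1/\psi(q)$ and the residue analysis of $T_a+T_b+3T_c\pmod 9$ (note that modulo $3$ this is literally the same statement as the cited $pod$ congruence, since both generating functions reduce to $f_1/f_2^2$), and you replace the Lucas-theorem analysis of $c_n(2,t)=\binom{n+t}{2t}$ by a direct analysis of $c_n(-1,t)$ through the identity $(1+z)(1-z+z^2)=1+z^3$, which cleanly forces $n\equiv 4\pmod 9$ in the first case and $n\equiv 1\pmod 3$ in the second. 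Your extra step handling the residual class $n\equiv 4\pmod 9$ in the second congruence (where $\binom{n+1}{2}\equiv 10\pmod{27}$ would land $k$ in the useless class $17\pmod{27}$) is necessary and correct: the coefficient reduces to $\binom{3(2J+1+u)}{3u-1}\equiv 0\pmod 3$ by Lucas; in the paper this case is absorbed into the claim that $c_n(2,9J+7)\equiv 0\pmod 3$ for $n\not\equiv 1,7\pmod 9$. The trade-off: the paper's argument is far shorter and reuses its own structure, while yours is self-contained, requiring neither the reduction to $a=2$ nor the external citation to \cite{HSpod}.
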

\begin{proof}
This result follows from Theorem \ref{MO2} below and Theorem \ref{overarching3}.
\end{proof}

\section{The Case $a=2$} 

From the table given in the introduction we have that
$$\sum_{k\geq 0} b_k(2) q^k=\frac{f_1}{f_2^2}=\sum_{k\geq 0} (-1)^k pod(k) q^{k}$$
because if we replace $q$ by $-q$, then $\frac{f_1}{f_2^2}$ transforms to 
$\frac{f_2}{f_1 f_4}$ which is the generating function for $pod(n)$, the number of partitions of $n$ wherein the odd parts must be distinct (and the even parts are unrestricted) \cite{HSpod, RSpod}.

Therefore
\begin{align}\label{idp2}
MO(2, t; N)=\sum_{k+\binom{n+1}{2}=N}pod(k)\cdot c_n(2, t)
\end{align}
where
$$
c_n(2, t)=\binom{n+t}{2t}.
$$
In \cite{HSpod}, the authors prove that, for all $k\geq 0$, 
\begin{equation}
\label{pod_27_26_mod3}
pod(27k+26) \equiv 0 \pmod{3}.
\end{equation}
We now use this fact to prove the following infinite families of congruences modulo 3.

\begin{theorem} \label{MO2}  For all $J\geq 0$ and all $N\geq 0$,
\begin{align*}
MO(2, 9J+4; 27N+9) &\equiv 0 \pmod{3},\\
MO(2, 9J+7; 27N) &\equiv 0 \pmod{3}.
\end{align*}
\end{theorem}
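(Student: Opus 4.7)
The plan is to combine the product formula \eqref{idp2} with Lucas's theorem applied to $\binom{n+t}{2t}$ modulo $3$ and the Hirschhorn--Sellers congruence \eqref{pod_27_26_mod3}. In each summand $pod(k)\binom{n+t}{2t}$ appearing in $MO(2,t;N)$, I will show that either the binomial coefficient vanishes modulo $3$, or else the surviving indices force $k \equiv 26 \pmod{27}$, which in turn kills $pod(k)$ modulo $3$ by \eqref{pod_27_26_mod3}.

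For the first congruence, fix $t = 9J+4$, so $2t \equiv 8 \pmod 9$, and the two lowest base-$3$ digits of $2t$ are both $2$. Lucas's theorem then forces the two lowest base-$3$ digits of $n+t$ to both be $2$ whenever $\binom{n+t}{2t}\not\equiv 0 \pmod 3$. Since $t \equiv 4 \pmod 9$, this pins $n$ down to $n \equiv 4 \pmod 9$. Writing $n = 9m+4$, a direct expansion gives $\binom{n+1}{2} = 81\cdot m(m+1)/2 + 10 \equiv 10 \pmod{27}$. Setting $N = 27M+9$, the constraint $k + \binom{n+1}{2}=N$ becomes $k \equiv 9-10 \equiv 26 \pmod{27}$, so $pod(k)\equiv 0 \pmod 3$ by \eqref{pod_27_26_mod3}.

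For the second congruence, fix $t = 9J+7$, so $2t \equiv 5 \pmod 9$; in base $3$ the lowest two digits of $2t$ are a $1$ in the $3$s position and a $2$ in the units position. Lucas's theorem now admits two admissible residues for $n \pmod 9$, namely $n \equiv 1$ and $n \equiv 7 \pmod 9$ (since $t \equiv 7 \pmod 9$, the other seven residue classes all fail the digit dominance on the bottom two positions). A short calculation modulo $54$, using that $m(m+1)$ is always even, shows that $\binom{n+1}{2}\equiv 1 \pmod{27}$ in both cases. With $N = 27M$, this forces $k \equiv -1 \equiv 26 \pmod{27}$, and \eqref{pod_27_26_mod3} again finishes the proof.

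The only slightly fussy point, which I expect to be the main obstacle, is the Lucas bookkeeping: one must check that the higher base-$3$ digits of $t$ coming from the $9J$ summand cannot disturb the dichotomies above. This is transparent, however, because both the Lucas dominance requirement on the bottom two digits and the value of $\binom{n+1}{2}\pmod{27}$ depend only on $n \pmod 9$ and $t \pmod 9$; the contribution of $9J$ is absorbed into the higher base-$3$ digits of $t$ without interaction with the bottom two digits, so the reduction is uniform in $J$.
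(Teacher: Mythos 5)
Your proposal is correct and follows essentially the same route as the paper: restrict $n$ modulo $9$ via the vanishing of $\binom{n+t}{2t}$ modulo $3$ (which you justify explicitly with Lucas's theorem, a detail the paper leaves implicit), compute $\binom{n+1}{2}$ modulo $27$ on the surviving residue classes, and invoke $pod(27k+26)\equiv 0\pmod 3$. All the residue computations ($n\equiv 4$ giving $\binom{n+1}{2}\equiv 10$, and $n\equiv 1,7$ giving $\binom{n+1}{2}\equiv 1$ modulo $27$) check out.
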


\begin{proof} Notice that, if $t=9J+4$ and $n\not\equiv 4 \pmod{9}$ then
$c_n(2, t)\equiv 0\pmod{3}$.
Therefore, by \eqref{idp2},
$$MO(2, 9J+4; 27N+9) \equiv \sum_{k+\binom{n+1}{2}=27N+9}(-1)^k pod(k)\cdot c_n(2, 9J+4)
\equiv 0 \pmod{3}$$
because $\binom{n+1}{2}\equiv 10 \pmod{27}$ 
when $n\equiv 4 \pmod{9}$, which implies $k\equiv -10+9\equiv 26 \pmod{27}$  
and  $pod(k)\equiv 0 \pmod{3}$ by \eqref{pod_27_26_mod3}.

Similarly, if $t=9J+7$ and $n\not\equiv 1,7 \pmod{9}$ then
$c_n(2, t)\equiv 0\pmod{3}$.
Therefore,  by \eqref{idp2},
$$MO(2, 9J+7; 27N) \equiv \sum_{k+\binom{n+t}{2t}=27N}(-1)^k pod(k)\cdot c_n(2, 9J+7)\equiv 0 \pmod{3}$$
because $\binom{n+1}{2}\equiv 1 \pmod{27}$ 
when $n\equiv 1,7 \pmod{9}$, which implies $k\equiv -1\equiv 26 \pmod{27}$  
and $pod(k)\equiv 0 \pmod{3}$ by \eqref{pod_27_26_mod3}.
\end{proof}

In \cite[Theorem 1.1]{CuiGuMa}, the authors proved
that, for all $k\geq 0$,
\begin{equation}\label{pod172mod5}
pod(625k+172) \equiv 0 \pmod{5},
\end{equation}
and
\begin{equation}\label{pod297mod5}
pod(625k+297) \equiv 0 \pmod{5}.
\end{equation}
This now allows us to prove two families of corresponding congruences modulo 5.

\begin{theorem} \label{MO25}  For all $J\geq 0$ and all $N\geq 0$,
\begin{align}
MO(2, 25J+12; 625N+250) &\equiv 0 \pmod{5}, \label{MO2_250}\\
MO(2, 25J+12; 625N+375) &\equiv 0 \pmod{5}. \label{MO2_375}
\end{align}
\end{theorem}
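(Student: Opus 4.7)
The plan is to follow the strategy of Theorem \ref{MO2} verbatim, but at the modulus $5$ instead of $3$. By \eqref{idp2} with $c_n(2,t)=\binom{n+t}{2t}$, it suffices to show that whenever a term in the sum survives modulo $5$ (i.e.\ $\binom{n+t}{2t}\not\equiv 0\pmod 5$), the index $k$ is forced into a residue class modulo $625$ on which the Cui--Gu--Ma congruences \eqref{pod172mod5} or \eqref{pod297mod5} guarantee that $pod(k)\equiv 0\pmod 5$.

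First, apply Lucas's theorem modulo $5$ with $t=25J+12$. In base $5$ one has $12=22_5$ and $24=44_5$, so the last two base-$5$ digits of $2t$ are $44$. Lucas therefore forces the last two base-$5$ digits of $n+t$ each to be at least $4$, hence exactly $4$; that is, $n+t\equiv 24\pmod{25}$, which gives $n\equiv 12\pmod{25}$.

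Next, write $n=25m+12$ for some $m\geq 0$. A direct computation yields
\[
\binom{n+1}{2}=\frac{(25m+13)(25m+12)}{2}=625\cdot\frac{m(m+1)}{2}+78\equiv 78\pmod{625}.
\]
Consequently, in \eqref{idp2} the only surviving indices $k$ for the target $625N+c$ satisfy $k\equiv c-78\pmod{625}$. For $c=250$, this gives $k\equiv 172\pmod{625}$, so $pod(k)\equiv 0\pmod 5$ by \eqref{pod172mod5}, proving \eqref{MO2_250}. For $c=375$, this gives $k\equiv 297\pmod{625}$, so $pod(k)\equiv 0\pmod 5$ by \eqref{pod297mod5}, proving \eqref{MO2_375}. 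The sign factor $(-1)^k$ plays no role since the whole summand vanishes modulo $5$.

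The argument is routine once the ingredients are in place; the only step one might call the \emph{main obstacle} is spotting the fortunate alignment between $\binom{n+1}{2}\equiv 78\pmod{625}$ (forced by the Lucas analysis with $t\equiv 12\pmod{25}$) and the two exceptional residues $172$ and $297$ appearing in \cite{CuiGuMa}. This alignment is precisely what dictates the choice of offsets $250$ and $375$ in the statement.
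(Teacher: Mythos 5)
Your proposal is correct and follows essentially the same route as the paper: restrict the surviving indices to $n\equiv 12\pmod{25}$ via the vanishing of $\binom{n+t}{2t}$ modulo $5$, compute $\binom{n+1}{2}\equiv 78\pmod{625}$, and invoke the Cui--Gu--Ma congruences at $k\equiv 172$ and $k\equiv 297\pmod{625}$. The only difference is cosmetic: you justify the binomial vanishing explicitly by Lucas's theorem, whereas the paper simply asserts the corresponding general fact for odd primes $p$ with $t\equiv\frac{p^2-1}{2}\pmod{p^2}$.
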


\begin{proof}
Let $p$ be an odd prime. If $t\equiv \frac{p^2-1}{2} \pmod{p^2}$ and $n\not\equiv \frac{p^2-1}{2} \pmod{p^2}$ then
$\binom{n+t}{2t}\equiv 0\pmod{p}$. Moreover, if $n\equiv \frac{p^2-1}{2} \pmod{p^2}$
then $\binom{n+1}{2}\equiv \frac{p^4-1}{8} \pmod{p^4}.$

Hence, by \eqref{idp2}, if $p=5$ then 
\begin{align*}
MO(2, &25J+12; 625N+250)\\
&= \sum_{k+\binom{n+1}{2}=625N+250}(-1)^k pod(k)\cdot c_n(2, 25J+12)\equiv 0 \pmod{5}
\end{align*}
because $\binom{n+1}{2}\equiv 78 \pmod{625}$ 
when $n\equiv 12 \pmod{25}$, which implies $k\equiv -78+250=172 \pmod{625}$.  
Thanks to \eqref{pod172mod5}, we see that \eqref{MO2_250} follows. 

Similarly, again by \eqref{idp2}, 
\begin{align*}
MO(2, &25J+12; 625N+375)\\
&= \sum_{k+\binom{n+1}{2}=625N+375}(-1)^k pod(k)\cdot c_n(2, 25J+12)\equiv 0 \pmod{5}
\end{align*}
because $k\equiv -78+375=297$, and thanks to \eqref{pod297mod5}, we see that \eqref{MO2_375} follows.
\end{proof}

\section{The Case $a=-2$}

As we open this section, we define the family of functions $p_{-r}(n)$ via their generating functions:  
$$\sum_{n\geq0}p_{-r}(n)q^n=\prod_{k\geq1}\frac1{(1-q^k)^r}.$$
From the table given in the introduction we have that
$$\sum_{k\geq 0} b_k(-2) q^k=\frac{1}{f_1^3}=\sum_{k\geq 0} \p3(k) q^{k}.$$
Therefore
\begin{align}\label{idm2}
MO(-2, t; N)=\sum_{k+\binom{n+1}{2}=N}\p3(k)\cdot c_n(-2, t)
\end{align}
where
$$c_n(-2, t)=(-1)^{n+t} \frac{2n+1}{2t+1} \binom{n+t}{2t}.$$

For our purposes, we wish to focus attention on $p_{-3}(n)$.  We have the following Ramanujan--like congruences.  

\begin{theorem}[\cite{KO}, Theorem 4]
For all $N\geq 0$, 
\begin{equation}\label{p311}
p_{-3}(11N+7)\equiv 0 \pmod{11}.
\end{equation}
\end{theorem}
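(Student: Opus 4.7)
The plan is to give a direct $q$-series proof modulo $11$. First I would use the binomial congruence $(1-q^k)^{11}\equiv 1-q^{11k}\pmod{11}$ to deduce $f_1^{11}\equiv f_{11}\pmod{11}$, and hence
\[
\sum_{n\geq 0}p_{-3}(n)\,q^{n}=\frac{1}{f_1^{3}}\equiv \frac{f_1^{8}}{f_{11}}\pmod{11}.
\]
Because $f_{11}$ and its reciprocal are power series in $q^{11}$, extracting the coefficient of $q^{11N+7}$ reduces the theorem to showing that $[q^{11M+7}]\,f_1^{8}\equiv 0\pmod{11}$ for every $M\geq 0$.

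The next step is to expand $f_1^{8}=(f_1^{3})^{2}\cdot f_1^{2}$ via Jacobi's identity $f_1^{3}=\sum_{a\geq 0}(-1)^{a}(2a+1)q^{a(a+1)/2}$ and Euler's pentagonal number theorem $f_1=\sum_{c\in\mathbb{Z}}(-1)^{c}q^{c(3c-1)/2}$. This yields
\[
[q^{n}]\,f_1^{8}=\sum_{E(a,b,c,d)=n}(-1)^{a+b+c+d}(2a+1)(2b+1),
\]
where $E(a,b,c,d)=\tfrac{a(a+1)+b(b+1)}{2}+\tfrac{c(3c-1)+d(3d-1)}{2}$. Introducing $X=2a+1$, $Y=2b+1$, $Z=6c-1$, $W=6d-1$ and multiplying the exponent relation by $24$, the condition $E=11M+7$ translates into the single Diophantine equation
\[
3X^{2}+3Y^{2}+Z^{2}+W^{2}=8\cdot 11\cdot(3M+2),
\]
whose left-hand side is automatically divisible by $11$.

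To finish, I would analyse the weighted sum $\sum(-1)^{a+b+c+d}XY$ modulo $11$. Because $11\equiv 3\pmod{4}$, $-1$ is a quadratic non-residue, so $3(X^{2}+Y^{2})\equiv 0\pmod{11}$ forces $X\equiv Y\equiv 0\pmod{11}$, and likewise $Z^{2}+W^{2}\equiv 0\pmod{11}$ forces $Z\equiv W\equiv 0\pmod{11}$. Solutions with $11\mid X$ or $11\mid Y$ contribute $0\pmod{11}$ directly through the factor $XY$; the remaining solutions, on which none of $X,Y,Z,W$ is divisible by $11$, should be organised by residue classes in $(\mathbb{Z}/11)^{\times}$ and paired off by a sign-reversing involution induced by a suitable twist of $(X,Y,Z,W)$ compatible with the quadratic form. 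Constructing this involution cleanly is the main obstacle of the argument. A conceptually cleaner alternative, and essentially the path taken by Kiming and Olsson in \cite{KO}, is to recognise $f_1^{-3}$ as arising from an eta-quotient, place the relevant $11$-dissection inside the appropriate space of modular forms modulo $11$, and reduce the congruence to a finite verification via Sturm's bound.
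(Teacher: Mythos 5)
The paper does not actually prove this statement: it is quoted verbatim from Kiming and Olsson \cite{KO}, Theorem 4, and used as a black box. So the relevant question is whether your self-contained argument stands on its own, and it does not: there is a genuine gap at the decisive step, which you yourself flag as ``the main obstacle.'' Your reduction is fine as far as it goes: $f_1^{11}\equiv f_{11}\pmod{11}$ gives $1/f_1^3\equiv f_1^8/f_{11}$, the factor $1/f_{11}$ is a series in $q^{11}$, so the theorem is equivalent to $[q^{11M+7}]f_1^8\equiv 0\pmod{11}$, and the substitution $X=2a+1$, $Y=2b+1$, $Z=6c-1$, $W=6d-1$ correctly converts the exponent condition into $3X^2+3Y^2+Z^2+W^2=8\cdot 11\cdot(3M+2)$. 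But from that point the argument collapses. The congruence $3X^2+3Y^2+Z^2+W^2\equiv 0\pmod{11}$ does \emph{not} split into $3(X^2+Y^2)\equiv 0$ and $Z^2+W^2\equiv 0$ separately, so the quadratic-nonresidue remark buys nothing: a nondegenerate quaternary quadratic form over $\mathbb{F}_{11}$ is isotropic, and in fact most solutions have all of $X,Y,Z,W$ prime to $11$ (e.g.\ $3\cdot 1+3\cdot 1+1+4\equiv 0$), each contributing a nonzero weight $\pm XY$. (Even your description of the leftover cases is off: there are solutions with $11\nmid XY$ but $11\mid Z$.) The entire content of the theorem is the cancellation among these terms, and no sign-reversing involution is exhibited. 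This is not a presentational gap: it is exactly why the modulus $11$ is the hard case in this family. For the mod $5$ and mod $7$ analogues the corresponding forms involve few enough variables that anisotropy forces a coordinate to vanish and the weight kills the term; in four variables over $\mathbb{F}_{11}$ that mechanism is unavailable, which is why the classical treatments resort to Winquist-type identities or to modular forms.

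The fallback you mention --- identify the relevant eta-quotient, apply $U_{11}$, land in an explicit space of modular forms mod $11$, and finish with Sturm's bound --- is a legitimate and standard route, but as written it is a plan, not a proof: no level, weight, or character is specified, no bound is computed, and no coefficients are checked. As it stands, the proposal establishes only the (correct) equivalence of the theorem with $[q^{11M+7}]f_1^8\equiv 0\pmod{11}$; the theorem itself remains unproved. Since the paper deliberately outsources this to \cite{KO}, the honest options are either to cite as the authors do, or to carry one of your two strategies to completion.
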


\begin{theorem}[\cite{SB}, Theorem 1.1 (iv)]
For all $N\geq 0$, 
\begin{equation}
p_{-3}(25N+22)\equiv 0 \pmod{5}.\label{p35}
\end{equation}
\end{theorem}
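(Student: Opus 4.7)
The plan is to follow the Ramanujan template for proofs of congruences of this form. The starting identity is $(1-q^k)^5 \equiv 1-q^{5k} \pmod 5$, which applied factor by factor yields $f_1^5 \equiv f_5 \pmod 5$. Combined with $\frac{1}{f_1^3}=\frac{f_1^2}{f_1^5}$, this gives
$$\sum_{n\geq 0}\p3(n)\,q^n \;\equiv\; \frac{f_1^2}{f_5}\pmod 5.$$
Since $f_5$ is a power series in $q^5$, the residue modulo $5$ of each exponent is carried entirely by $f_1^2$, so the coefficient of $q^{25N+22}$ is controlled by the $q^2$-component of the $5$-dissection of $f_1^2$.

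To carry out that $5$-dissection, I would start from Euler's pentagonal number theorem $f_1=\sum_{k\in\mathbb Z}(-1)^k q^{k(3k-1)/2}$; a direct check shows that $k(3k-1)/2 \pmod 5 \in \{0,1,2\}$, so $f_1$ is supported only at exponents $\equiv 0, 1, 2 \pmod 5$. Squaring produces a $5$-dissection
$$f_1^2 \;=\; A_0(q^5)+qA_1(q^5)+q^2 A_2(q^5)+q^3 A_3(q^5)+q^4 A_4(q^5),$$
where each $A_r \in \mathbb Z[[q]]$ is an explicit double sum over integer pairs coming from the self-convolution of the pentagonal series. Dividing by $f_5$ preserves the dissection, and extracting the $q^2 \pmod{q^5}$ piece yields
$$\sum_{M\geq 0}\p3(5M+2)\,q^{M} \;\equiv\; \frac{A_2(q)}{f_1}\pmod 5.$$

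The claim is then equivalent to showing that the coefficient of $q^{5N+4}$ in $A_2(q)/f_1$ is divisible by $5$ for every $N \geq 0$, and I expect this to be the main obstacle: it requires a second iteration of the dissection-plus-Fermat argument, but now $A_2$ is no longer a simple $\eta$-quotient. The natural route is to recast $A_2$ via the Ramanujan--Watson factorization $f_1 = f_{25}\bigl(R(q^5)^{-1}-q-q^2 R(q^5)\bigr)$, where $R(q)$ is the Rogers--Ramanujan continued fraction, so as to re-express $A_2(q)/f_1$ as a combination of theta quotients in which the divisibility by $5$ at the targeted exponents becomes transparent. A more conceptual alternative would be to recognize (up to an appropriate $q$-power) $1/f_1^3$ as coming from a weight $-3/2$ modular form on $\Gamma_0(25)$ and to invoke the $U_5$ Hecke operator together with Serre's mod-$\ell$ congruence theory, reducing the claim to a finite check on a suitable space of cusp forms.
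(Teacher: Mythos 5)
The paper itself offers no proof of this statement: it is imported verbatim from Saikia and Boruah \cite{SB}, Theorem 1.1(iv), and used as a black box, so the only question is whether your argument stands on its own. It does not. Your reduction is correct as far as it goes: $f_1^5\equiv f_5\pmod 5$ gives $\sum_{n\geq 0}p_{-3}(n)q^n\equiv f_1^2/f_5\pmod 5$; the generalized pentagonal numbers do lie in $\{0,1,2\}$ modulo $5$; and extracting the arithmetic progression correctly reduces the theorem to the claim that $[q^{5N+4}]\bigl(A_2(q)/f_1\bigr)\equiv 0\pmod 5$ for all $N\geq 0$. But that final claim is where the entire content of the theorem lives, and you do not prove it --- you only name two possible strategies (the Rogers--Ramanujan $5$-dissection of $f_1$, or weight $-3/2$ forms with $U_5$ and a Sturm-type bound) without carrying either one out.

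Neither strategy is routine enough to wave at. For the first, you would need to identify $A_2$ explicitly as a theta quotient from the convolution of pentagonal series (the residue pairs $(0,2)$, $(2,0)$, $(1,1)$), substitute $f_1=f_{25}\bigl(R(q^5)^{-1}-q-q^2R(q^5)\bigr)$, and then verify a nontrivial cancellation in a product that is no longer an eta quotient; this is essentially Watson's method for Ramanujan's modulo $25$ congruence and is several pages of identities, not a remark. For the second, $1/f_1^3$ has negative weight and is not holomorphic at the cusps, so invoking Serre's theory and a finite check requires a genuine construction (multiplying by a suitable eta power to land in a space of cusp forms) that you have not specified. As written, your argument establishes only the easy first dissection and leaves the theorem unproved; the honest conclusion of your text is a conjecture-shaped reduction, which is why the paper simply cites \cite{SB} rather than reproving the result.
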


\begin{theorem}[\cite{SB}, Theorem 1.2 (v)]
For all $N\geq 0$ and $M=2,4,5,6$, 
\begin{equation}
p_{-3}(49N+7M+1)\equiv 0 \pmod{7}.\label{p37}
\end{equation}
\end{theorem}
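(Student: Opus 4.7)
My plan is to exploit the fact that $p_{-3}(n)=[q^n]\,1/f_1^3$ and reduce the claim modulo~$7$ to a dissection of a classical eta-product. Starting from the elementary identity $f_1^7\equiv f_7\pmod 7$ (a consequence of $(1-q^k)^7\equiv 1-q^{7k}\pmod 7$), we obtain
\[
\frac{1}{f_1^3}\equiv \frac{f_1^4}{f_7}\pmod 7,
\]
so the theorem becomes a statement about the coefficients of $f_1^4/f_7$ at positions $49N+7M+1$ for $M\in\{2,4,5,6\}$.

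Next I would $7$-dissect $f_1^4=f_1\cdot f_1^3$ using Euler's pentagonal number theorem and Jacobi's identity to write
\[
f_1^4=\sum_{k,\,j}(-1)^{k+j}(2j+1)\,q^{k(3k-1)/2+j(j+1)/2}.
\]
Modulo~$7$, the terms with $j\equiv 3\pmod 7$ drop out since $2j+1\equiv 0$. A short calculation shows that $k(3k-1)/2\pmod 7$ takes only the values $\{0,1,2,5\}$, while $j(j+1)/2\pmod 7$ for $j\not\equiv 3\pmod 7$ takes only $\{0,1,3\}$; collecting terms by the residue of the exponent mod~$7$ lets me write $f_1^4\equiv\sum_{r=0}^6 A_r(q^7)\,q^r\pmod 7$ for explicit power series $A_r$. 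Because multiplication by $1/f_7$ shifts exponents only by multiples of~$7$, isolating the $q^{7n+1}$-term yields
\[
p_{-3}(7n+1)\equiv [q^n]\frac{A_1(q)}{f_1}\pmod 7,
\]
and the theorem reduces to showing that the coefficients of $A_1(q)/f_1$ at positions $n\equiv 2,4,5,6\pmod 7$ vanish mod~$7$.

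The final step is a second $7$-dissection of $A_1(q)/f_1$. Applying $f_1^7\equiv f_7$ again rewrites $A_1(q)/f_1\equiv A_1(q)f_1^6/f_7\pmod 7$, and expanding $f_1^6=(f_1^3)^2$ by a double application of Jacobi produces an explicit triple sum whose exponents can once more be classified modulo~$7$; one then checks that the four residue classes $M\in\{2,4,5,6\}$ receive only contributions whose coefficients are divisible by~$7$. The main obstacle is the combinatorial bookkeeping needed to identify $A_1(q)$ in closed form: the first dissection already involves twelve sub-sums (three choices of $(k\bmod 7,j\bmod 7)$ admissible residue-pairs, each with multiplicity four), and one hopes that after cancellation $A_1(q)$ simplifies to a short combination of eta-products whose $7$-dissection is classical, so that the vanishing emerges from a known Ramanujan-type identity. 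As a fallback, one can interpret $1/f_1^3$, after multiplying by a suitable power of~$q$, as a weakly holomorphic modular form on $\Gamma_0(7)$ and verify the congruence by a bounded coefficient check in the finite-dimensional space of modular forms modulo~$7$.
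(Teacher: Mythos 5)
First, a point of orientation: the paper does not prove this statement at all --- it is quoted verbatim from \cite[Theorem 1.2 (v)]{SB} --- so your attempt has to stand entirely on its own, and as written it is a strategy outline rather than a proof. Your setup is sound: $f_1^7\equiv f_7\pmod 7$ does give $1/f_1^3\equiv f_1^4/f_7$, and your residue computations are correct (the pentagonal exponents hit $\{0,1,2,5\}$ and the triangular exponents with $j\not\equiv 3\pmod 7$ hit $\{0,1,3\}$ modulo $7$). But note that $\{0,1,2,5\}+\{0,1,3\}$ covers \emph{every} residue class modulo $7$, so the first dissection eliminates nothing; the entire content of the theorem sits in the second stage, which you leave as a hope (``one then checks\ldots'', ``one hopes that after cancellation\ldots''). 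That step is not a routine residue count. Writing $24E+4=3(2j+1)^2+(6k-1)^2$ for the exponents occurring in $f_1^4$, the terms surviving modulo $7$ in the class $E\equiv 1\pmod 7$ are exactly those with $6k-1\equiv\pm 2(2j+1)\pmod 7$, and these do occur with nonvanishing coefficients; if one then tries to finish by feeding $p(7s+5)\equiv 0\pmod 7$ into the $1/f_7$ factor, the residue of $s$ modulo $7$ depends on $j$ and $k$ and is not always $5$. So the asserted vanishing for $M\in\{2,4,5,6\}$ requires genuine cancellation among your twelve sub-sums, or an identification of $A_1(q)$ as an explicit theta/eta combination with a known $7$-dissection; none of this is carried out, and it is precisely where the theorem lives.

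Your fallback is the more realistic route, but it too needs repair as stated: $1/f_1^3$ has negative weight and a pole, so one cannot Sturm-bound it directly. The standard fix is to multiply the generating function of $p_{-3}(49N+7M+1)$ by a suitable power of $f_7$ (equivalently, an eta-power) so that, modulo $7$, one lands in a finite-dimensional space of holomorphic modular forms on $\Gamma_0(7)$ or $\Gamma_0(49)$, and then verify finitely many coefficients up to the Sturm bound. Either complete that argument with the precise space, weight, and bound, or simply cite \cite{SB} as the paper does.
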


We now use Theorem \ref{p311}, Theorem \ref{p35}, and Theorem \ref{p37} to prove the following new families of congruences
(in addition to the arithmetic properties that were mentioned in Section \ref{sec:known_properties} that correspond to the value $a=2$). 

\begin{theorem} \label{MOm211}  For all $J\geq 0$ and $N\geq 0$,
\begin{align*}
&MO(-2, 11J+4; 11N+6)\equiv 0 \pmod{11}.
\end{align*}
\end{theorem}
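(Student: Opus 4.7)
My plan is to analyze the convolution \eqref{idm2} with $t=11J+4$ and with $N$ replaced there by $11N+6$, and to show that modulo $11$ each summand vanishes. The initial step is to recast $c_n(-2,t)$ so that the denominator $2t+1$ disappears. Writing $2n+1=(2t+1)+2(n-t)$ and using $(n-t)\binom{n+t}{2t}=(2t+1)\binom{n+t}{2t+1}$ gives
\begin{equation*}
(2n+1)\binom{n+t}{2t}=(2t+1)\left[\binom{n+t}{2t}+2\binom{n+t}{2t+1}\right],
\end{equation*}
and hence $c_n(-2,t)=(-1)^{n+t}\bigl[\binom{n+t}{2t}+2\binom{n+t}{2t+1}\bigr]$, an expression well suited to Lucas-style analysis.

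Next I apply Lucas's theorem modulo $11$ to these two binomials with $t=11J+4$. The ones digits (base $11$) of $2t$ and $2t+1$ are $8$ and $9$ respectively. Writing $n\equiv n_0\pmod{11}$, the ones digit of $n+t$ equals $n_0+4$ if $n_0\le 6$ and $n_0-7$ if $n_0\ge 7$; in particular it is at most $7$ unless $n_0\in\{4,5,6\}$. A case check then shows that for $n_0\notin\{4,5,6\}$ both binomials vanish modulo $11$, so $c_n(-2,t)\equiv 0\pmod{11}$. The subtle case is $n_0=5$: here both binomials can individually be nonzero mod $11$, but $2n+1\equiv 0\pmod{11}$ while $2t+1\equiv 9\not\equiv 0\pmod{11}$, so the displayed identity forces $\binom{n+t}{2t}+2\binom{n+t}{2t+1}\equiv 0\pmod{11}$, killing $c_n(-2,t)$ in this residue too. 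Thus only $n\equiv 4$ or $n\equiv 6\pmod{11}$ can contribute, and in both of these one checks $\binom{n+1}{2}\equiv 10\pmod{11}$ directly from $\frac{(n_0+1)n_0}{2}\bmod 11$.

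It follows that any $n$ contributing nontrivially modulo $11$ to \eqref{idm2} forces $k=11N+6-\binom{n+1}{2}\equiv 6-10\equiv 7\pmod{11}$, at which point \eqref{p311} gives $p_{-3}(k)\equiv 0\pmod{11}$. Summing over $n$ concludes the proof. The main obstacle is precisely the residue $n\equiv 5\pmod{11}$: Lucas alone does not eliminate it, and one genuinely needs the arithmetic identity obtained at the outset, combined with the fact that $11\mid (2n+1)$ precisely in that residue class.
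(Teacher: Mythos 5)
Your proof is correct and follows essentially the same route as the paper: restrict to the residues $n\equiv 4,6\pmod{11}$ where $c_n(-2,11J+4)$ can be nonzero, deduce $\binom{n+1}{2}\equiv 10\pmod{11}$, hence $k\equiv 7\pmod{11}$, and invoke $p_{-3}(11N+7)\equiv 0\pmod{11}$. The only difference is that you supply the Lucas-theorem details (including the elimination of the residue $n\equiv 5\pmod{11}$ via the identity $(2n+1)\binom{n+t}{2t}=(2t+1)\bigl[\binom{n+t}{2t}+2\binom{n+t}{2t+1}\bigr]$) which the paper asserts without proof.
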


\begin{proof} Note that for any odd prime $p$,
if  $t\equiv \frac{p-3}{2}\pmod{p}$ and $c_n(-2, t)\not \equiv 0 \pmod{p}$ then  $n \equiv \frac{p-3}{2},\frac{p+1}{2} \pmod{p}$
which implies that $\binom{n+1}{2}\equiv \binom{\frac{p-1}{2}}{2} \pmod{p}$.

Hence, for $p=11$, we find $\binom{n+1}{2} \equiv 10 \pmod{11}$, and by \eqref{idm2},
$$MO(-2, 11J+4; 11N+6)=\sum_{k+\binom{n+1}{2}=11N+6}\p3(k)\cdot c_n(-2, 11J+4)\equiv 0 \pmod{11}$$
because $k\equiv 6-10\equiv 7 \pmod{11}$, and, by \eqref{p311}, $\p3(k)\equiv 0 \pmod{11}.$
\end{proof}

\medskip 

\begin{remark}
The above strategy works also for the congruence
$$MO(-2, 11J+10; 11N+7)\equiv 0 \pmod{11}$$
proved in \cite[Theorem 1.5]{AOS}. Indeed, for any odd prime $p$,
$$\text{if } t\equiv p-1\pmod{11}\text{ and } c_n(-2, t)\not \equiv 0 \pmod{p} 
\text{ then }
n \equiv 0,p-1 \pmod{p}$$ 
which implies that $\binom{n+1}{2}\equiv 0 \pmod{p}.$

Hence, for $p=11$, we find $\binom{n+1}{2} \equiv 0\pmod{11}$, and by \eqref{idm2},
$$MO(-2, 11J+10; 11N+7)=\sum_{k+\binom{n+1}{2}=11N+7}\p3(k)\cdot c_n(-2, 11J+7)\equiv 0 \pmod{11}$$
because $k\equiv 7 \pmod{11}$, and, by \eqref{p311}, $\p3(k)\equiv 0 \pmod{11}.$
\end{remark}

\begin{theorem} \label{MOm225} For all $J\geq 0$ and $N\geq 0$,
\begin{align*}
&MO(-2, 25J+11; 25N+13)\equiv 0 \pmod{5},\\
&MO(-2, 25J+24; 25N+22)\equiv 0 \pmod{5},
\end{align*}
and for $M=0, 1, 2, 5$, 
$$MO(-2, 49J+23; 49N+7M+11)\equiv 0 \pmod{7},$$
whereas, for $M=2, 4, 5, 6$, 
$$MO(-2, 49J+48; 49N+7M+1)\equiv 0 \pmod{7}.$$
\end{theorem}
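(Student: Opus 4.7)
The plan is to adapt the argument in the proof of Theorem \ref{MOm211}, now working modulo $p^2$ rather than modulo $p$ (with $p\in\{5,7\}$). From \eqref{idm2},
\[ MO(-2, t; N)=\sum_{k+\binom{n+1}{2}=N}\p3(k)\cdot c_n(-2, t), \]
the strategy for each of the four congruences is to determine the residues of $n\pmod{p^2}$ for which $c_n(-2,t)\not\equiv 0\pmod{p}$, compute $\binom{n+1}{2}\pmod{p^2}$ on those residues, and verify that the resulting residue class of $k$ is one of those covered by \eqref{p35} or \eqref{p37}.

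The first and third congruences share the form $t\equiv (p^2-3)/2\pmod{p^2}$, so that the last two base-$p$ digits of $2t\bmod p^2$ are $(p-1,p-3)$. Applying Lucas's theorem to $\binom{n+t}{2t}\pmod{p}$ while tracking the carry in the base-$p$ expansion of $n+t$, I expect three surviving residue classes, $n\equiv (p^2-3)/2,\ (p^2-1)/2,\ (p^2+1)/2\pmod{p^2}$; the middle one is killed by the factor $2n+1=p^2\equiv 0\pmod{p}$. For the two remaining classes one checks directly that $\binom{n+1}{2}\equiv\binom{(p^2-1)/2}{2}\pmod{p^2}$, which evaluates to $16$ modulo $25$ and $31$ modulo $49$. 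Substituting gives $k\equiv 22\pmod{25}$ (so \eqref{p35} applies) and $k\equiv 7M+29\pmod{49}$, which for $M\in\{0,1,2,5\}$ produces the four residues $\{29,36,43,15\}$---these are precisely $7M'+1$ for $M'\in\{4,5,6,2\}$, exactly the cases covered by \eqref{p37}.

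The second and fourth congruences share the form $t\equiv -1\pmod{p^2}$, making the last two base-$p$ digits of $2t\bmod p^2$ equal to $(p-1,p-2)$. A parallel Lucas analysis should restrict $n$ to $\{0,-1\}\pmod{p^2}$, and $2n+1$ is nonzero modulo $p$ in both cases. Since $n(n+1)\equiv 0\pmod{p^2}$ for these two classes, $\binom{n+1}{2}\equiv 0\pmod{p^2}$, so $k\equiv 22\pmod{25}$ or $k\equiv 7M+1\pmod{49}$; \eqref{p35} handles the second congruence, and \eqref{p37} handles the fourth for $M\in\{2,4,5,6\}$.

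The principal obstacle is the digit-level bookkeeping: for Lucas's theorem at modulus $p^2$ one must carefully track how a carry in the last base-$p$ digit of $n+t$ propagates into the second digit, and, for the first family, confirm that it is the factor $(2n+1)$ rather than the binomial itself that eliminates the unwanted middle residue. Once these digit computations are verified, all four congruences follow at once from \eqref{p35} and \eqref{p37}.
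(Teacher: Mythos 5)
Your proposal is correct and follows essentially the same route as the paper's proof: reduce via \eqref{idm2}, locate the residues of $n$ modulo $p^2$ where $c_n(-2,t)\not\equiv 0\pmod p$, evaluate $\binom{n+1}{2}$ there ($16$ mod $25$, $31$ mod $49$, or $0$), and invoke \eqref{p35} and \eqref{p37}; your extra observation that the middle class $n\equiv(p^2-1)/2$ is eliminated by the factor $2n+1$ is a correct detail the paper leaves implicit.
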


\begin{proof} Note that if $t\equiv \frac{p^2-3}{2}\pmod{p^2}$ and $c_n(-2, t)\not \equiv 0 \pmod{p}$ then 
$n \equiv \frac{p^2-3}{2},\frac{p^2+1}{2} \pmod{p^2}$
which implies that $\binom{n+1}{2}\equiv \binom{\frac{p^2-1}{2}}{2} \pmod{p^2}.$

\noindent Thus, for $p=5$,
$$\text{if } t\equiv 11\pmod{25}\text{ and } c_n(-2, t)\not \equiv 0 \pmod{5} 
\text{ then } n \equiv 11,13 \pmod{25} $$ 
which implies that $\binom{n+1}{2}\equiv 16 \pmod{25}.$
Therefore, by \eqref{idm2},  we find
\begin{align*}
MO(-2, &25J+11; 25N+13)\\
&=\sum_{k+\binom{n+1}{2}=25N+13}p_{-3}(k)\cdot c_n(-2, 25J+11)\equiv 0\pmod{5}
\end{align*}
because $k\equiv 13-16\equiv 22 \pmod{25}$, and by \eqref{p35}, $p_{-3}(k)\equiv 0\pmod{5}$.

\noindent For $p=7$,
$$\text{if } t\equiv 23\pmod{49}\text{ and } c_n(-2, t)\not \equiv 0 \pmod{7} 
\text{ then }
n \equiv 23, 25 \pmod{49}$$ 
which implies that $\binom{n+1}{2}\equiv 31 \pmod{49}.$
Hence, by \eqref{idm2}, for $M=0, 1, 2, 5$, we obtain
\begin{align*}
MO(-2, &49J+23; 49N+7M+11)\\
&=\sum_{k+\binom{n+1}{2}=49N+7M+11}p_{-3}(k)\cdot c_n(-2, 49J+23)\equiv 0\pmod{7}
\end{align*}
because $k\equiv 7M+11-31\equiv 7(M+4)+1 \pmod{49}$, and by \eqref{p37}, $p_{-3}(k)\equiv 0\pmod{7}$.\medskip

Furthermore, if $t\equiv p^2-1\pmod{p^2}$ and $c_n(-2, t)\not \equiv 0 \pmod{p}$ then $n \equiv 0,p^2-1 \pmod{p^2}$ 
which implies that $\binom{n+1}{2}\equiv 0 \pmod{p^2}.$

\noindent Hence, for $p=5$,
$$\text{if } t\equiv 24\pmod{25}\text{ and } c_n(-2, t)\not \equiv 0 
\pmod{25} \text{ then }
n \equiv 0, 24 \pmod{25}$$ 
which implies that $\binom{n+1}{2}\equiv 0 \pmod{25}.$
Therefore, by \eqref{idm2}, for $M=2, 4, 5, 6$, we find
\begin{align*}
MO(-2, &25J+24; 25N+22)\\
&=\sum_{k+\binom{n+1}{2}=25N+22}p_{-3}(k)\cdot c_n(-2, 25J+24)\equiv 0\pmod{5}
\end{align*}
because $k\equiv 22 \pmod{25}$, and by \eqref{p35}, $p_{-3}(k)\equiv 0\pmod{5}$.

\noindent For $p=7$,
$$\text{if } t\equiv 48\pmod{49}\text{ and } c_n(-2, t)\not \equiv 0 \pmod{49} 
\text{ then } n \equiv 0, 48 \pmod{49}$$ 
which implies that $\binom{n+1}{2}\equiv 0 \pmod{49}$. 
Thus, by \eqref{idm2}, we get
\begin{align*}
MO(-2, &49J+48; 49N+7M+1)\\
&=\sum_{k+\binom{n+1}{2}=49N+7M+1}p_{-3}(k)\cdot c_n(-2, 49J+48)\equiv 0\pmod{7}
\end{align*}
because $k\equiv 7M+1 \pmod{49}$, and by \eqref{p37}, $p_{-3}(k)\equiv 0\pmod{7}$.
\end{proof}

In preparation for proving Theorem \ref{mod9} below, we highlight the following congruence modulo 3. 

\begin{lemma}
\label{3n+1mod3}
For all $k\geq 0$, $p_{-3}(3k+1) \equiv 0 \pmod{3}$
and $p_{-3}(3k+2) \equiv 0 \pmod{3}.$
\end{lemma}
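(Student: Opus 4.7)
My plan is to exploit the standard Frobenius-type congruence for power series over $\mathbb{F}_3$, namely that for any prime $p$ and any integer $r\geq 1$, $(1-q^r)^p \equiv 1-q^{rp} \pmod{p}$. Applying this with $p=3$ to each factor of $f_1^3 = \prod_{k\geq 1}(1-q^k)^3$, I obtain the identity
\begin{equation*}
f_1^3 = \prod_{k\geq 1}(1-q^k)^3 \equiv \prod_{k\geq 1}(1-q^{3k}) = f_3 \pmod{3}.
\end{equation*}

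Since $f_3$ is a unit in $\mathbb{Z}_3[[q]]$ (its constant term equals $1$), I can invert the congruence to conclude
\begin{equation*}
\sum_{n\geq 0} p_{-3}(n)\, q^n = \frac{1}{f_1^3} \equiv \frac{1}{f_3} \pmod{3}.
\end{equation*}
Now the generating function $\frac{1}{f_3} = \sum_{k\geq 0} p(k)\, q^{3k}$ (already recorded at the start of the section on $a=1$) is supported entirely on exponents divisible by $3$. Comparing coefficients of $q^{3k+1}$ and $q^{3k+2}$ on both sides therefore yields $p_{-3}(3k+1)\equiv 0\pmod{3}$ and $p_{-3}(3k+2)\equiv 0\pmod{3}$ for every $k\geq 0$, as claimed.

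There is no real obstacle here: the only subtlety is the legitimacy of inverting a congruence of formal power series, but this is automatic because the denominator $f_3$ reduces to a power series with constant term $1$ in $\mathbb{F}_3[[q]]$, so its inverse exists and is unique. The lemma is essentially an immediate corollary of the freshman's dream modulo~$3$.
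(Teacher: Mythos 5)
Your proof is correct and follows exactly the same route as the paper: the congruence $f_1^3\equiv f_3\pmod 3$ via the freshman's dream, inversion of the resulting power series, and the observation that $1/f_3$ is supported on exponents divisible by $3$. Your version simply spells out the (valid) justification for inverting the congruence, which the paper leaves implicit.
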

\begin{proof}
It suffices to note that
$$\sum_{n\geq0}p_{-3}(n)x^n=\prod_{k\geq1}\frac1{(1-x^k)^3} \equiv \prod_{k\geq1}\frac1{(1-x^{3k})} \pmod{3}$$ 
and this last product is a function of $q^3$. 
\end{proof}

\begin{theorem}\label{mod9} For all $J\geq 0$ and $N\geq 0$,
\begin{align*}
&MO(-2, 3J; 3N+2) \equiv 0 \pmod{9},\\  
&MO(-2, 3J+2; 3N+2) \equiv 0 \pmod{9}.
\end{align*}
\end{theorem}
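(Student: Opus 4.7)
The plan mirrors the case-analysis template used throughout the preceding sections: decompose $MO(-2, t; 3N+2)$ via \eqref{idm2} and check divisibility of each summand modulo $9$, splitting by the residue of $n$ modulo $3$. Since $\binom{n+1}{2} \pmod 3$ equals $1$ when $n \equiv 1 \pmod 3$ and $0$ otherwise, the constraint $k+\binom{n+1}{2}=3N+2$ forces $k \equiv 1 \pmod 3$ in the first case and $k \equiv 2 \pmod 3$ in the second.

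The key new input, which I would isolate as a lemma strengthening Lemma \ref{3n+1mod3}, is the congruence
$$p_{-3}(3k+2) \equiv 0 \pmod 9.$$
To prove it I would invoke Jacobi's identity $f_1^3 = \sum_{n\geq 0} (-1)^n (2n+1) q^{n(n+1)/2}$ and split the sum by $n \bmod 3$. Using $6m+3 = 3(2m+1)$ and $\binom{3m+2}{2} = 9m(m+1)/2 + 1$, the $n \equiv 1 \pmod 3$ contribution equals $-3q f_9^3$, while the $n \equiv 0, 2 \pmod 3$ contribution $P(q) := f_1^3 + 3q f_9^3$ lies in $\mathbb{Z}[[q^3]]$. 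Inverting modulo $9$ then yields
$$\frac{1}{f_1^3} \equiv \frac{1}{P(q)} + \frac{3 q f_9^3}{P(q)^2} \pmod 9,$$
and since $1/P(q) \in \mathbb{Z}[[q^3]]$ while $3q f_9^3/P(q)^2$ only carries exponents $\equiv 1 \pmod 3$, no coefficient at $q^{3k+2}$ survives.

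The second ingredient is that $c_n(-2, t) \equiv 0 \pmod 3$ whenever $n \equiv 1 \pmod 3$ and $t \in \{3J,\, 3J+2\}$. Working modulo $3$ with $c_n(-2, t) = (-1)^{n-t} [z^n] \frac{z^t(1+z)}{(1-z)^{2t+2}}$ and using $(1-z)^3 \equiv 1-z^3 \pmod 3$: for $t=3J$ the denominator reduces to $(1-z)^2(1-z^3)^{2J}$, so the $z^n$-coefficient is $\sum_{m+3l = n-3J} (2m+1)\binom{2J+l-1}{l}$ with $m \equiv n \pmod 3$, and the factor $2m+1$ vanishes mod $3$ exactly when $m \equiv 1 \pmod 3$. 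For $t=3J+2$ the denominator collapses to $(1-z^3)^{2J+2}$, so the resulting series has exponents only in the classes $0, 2 \pmod 3$; either way the coefficient in question is $\equiv 0 \pmod 3$.

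Combining: if $n \equiv 1 \pmod 3$ then Lemma \ref{3n+1mod3} gives $p_{-3}(k) \equiv 0 \pmod 3$ and the second ingredient gives $c_n(-2, t) \equiv 0 \pmod 3$, so the product is $\equiv 0 \pmod 9$; if $n \equiv 0, 2 \pmod 3$ then the strengthened lemma gives $p_{-3}(k) \equiv 0 \pmod 9$ directly, so again the product is $\equiv 0 \pmod 9$. Summing in \eqref{idm2} delivers $MO(-2, t; 3N+2) \equiv 0 \pmod 9$ for both $t = 3J$ and $t = 3J+2$. The main obstacle is the mod-$9$ refinement of Lemma \ref{3n+1mod3}: it requires going beyond the elementary reduction $(1-q^k)^3 \equiv 1-q^{3k} \pmod 3$ and extracting the exact Jacobi dissection of $f_1^3$. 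Once that refinement is in hand, the remaining coefficient computations and the final assembly are routine in the spirit of the preceding proofs.
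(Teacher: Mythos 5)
Your proposal is correct and follows the same skeleton as the paper's proof: decompose via \eqref{idm2}, split on $n \bmod 3$, and use $p_{-3}(3k+2)\equiv 0 \pmod 9$ in one case and $p_{-3}(3k+1)\equiv 0 \pmod 3$ together with $3\mid c_n(-2,t)$ in the other. The two places you diverge are in justifying the ingredients rather than in the architecture: the paper simply cites Gandhi for $p_{-3}(3k+2)\equiv 0\pmod 9$, whereas your Jacobi-triple-product dissection $f_1^3 = P(q) - 3qf_9^3$ with $P(q)\in\mathbb{Z}[[q^3]]$ gives a clean self-contained proof of it; and for the divisibility of $c_n(-2,t)$ when $n\equiv 1\pmod 3$, the paper reads it off in one line from the closed form $(-1)^{n+t}\frac{2n+1}{2t+1}\binom{n+t}{2t}$ (since $3\mid 2n+1$ and $3\nmid 2t+1$ for $t\equiv 0,2\pmod 3$), which is shorter than your generating-function reduction modulo $3$, though both are valid.
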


\begin{proof}
By \eqref{idm2}, we have that for $t\equiv 0,2 \pmod{3}$, 
$$MO(-2, t; 3N+2)=\sum_{k+\binom{n+1}{2}=3N+2} p_{-3}(k)\cdot c_n(-2, t)\equiv 0\pmod{3}$$ 
because if $n\equiv 0,2\pmod{3}$ then $\binom{n+1}{2}\equiv 0\pmod{3}$ which implies that $k\equiv 2 \pmod{3}$ and $p_{-3}(3k+2)\equiv 0 \pmod{9}$ (see \cite{Gandhi}),
otherwise if $n\equiv 1\pmod{3}$ then $\binom{n+1}{2}\equiv 1\pmod{3}$ which implies that $k\equiv 1 \pmod{3}$, $p_{-3}(3k+1)\equiv 0 \pmod{3}$ (from Lemma \ref{3n+1mod3}) and 
$3$ divides $c_n(-2, t)$ (indeed $2n+1\equiv 0 \pmod{3}$ and $2t+1\not \equiv 0 \pmod{3})$.
\end{proof}

\section{The case $a=0$}

From the table given in the introduction we have that
$$\sum_{k\geq 0} b_k(0) q^k=\frac{f_2}{f_1 f_4}=\sum_{k\geq 0} pod(k) q^{k}.$$ 
Therefore
\begin{equation}\label{id0}
MO(0, t; N)=\sum_{k+\binom{n+1}{2}=N}pod(k)\cdot c_n(0, t)
\end{equation}
where
$$c_n(0, t)=(-1)^{n+\lfloor (n+t)/2\rfloor} \binom{\lfloor(n+t)/2\rfloor}{t}.$$

Although there do not appear to be many Ramanujan--like congruences satisfied by $MO(0, t; N)$, we can highlight the following infinite family of congruences modulo 3  which follows from \eqref{pod_27_26_mod3}.
 
\begin{theorem} \label{MO0}  For all $J\geq 0$ and all $N\geq 0$,
\begin{align*}
MO(0, 27J+26; 27N+26) &\equiv 0 \pmod{3}.
\end{align*}
\end{theorem}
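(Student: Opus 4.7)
The plan is to mirror the strategy used repeatedly in Sections 5 and 6: use Lucas's theorem on the explicit formula for $c_n(0, t)$ to pin down the residue class of $n$ modulo a power of $3$ whenever $c_n(0, 27J+26) \not\equiv 0 \pmod{3}$, then translate that into a residue condition on $\binom{n+1}{2}$, and finally exploit \eqref{pod_27_26_mod3}.

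Concretely, I would first observe that $t = 27J+26$ has base-$3$ expansion ending in the digits $222$ since $26 = 222_3$. By Lucas's theorem applied to
\[
c_n(0, t) = (-1)^{n+\lfloor(n+t)/2\rfloor} \binom{\lfloor(n+t)/2\rfloor}{t},
\]
the binomial coefficient is nonzero modulo $3$ only when each of the three least significant base-$3$ digits of $m := \lfloor(n+t)/2\rfloor$ equals $2$, that is, when $m \equiv 26 \pmod{27}$.

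Next, writing $n+t = 2m + \epsilon$ with $\epsilon \in \{0,1\}$ the parity of $n+t$, the condition $m \equiv 26 \pmod{27}$ gives $n + t \equiv 52+\epsilon \pmod{54}$. Reducing modulo $27$ and using $t \equiv -1 \pmod{27}$ yields $n \equiv 26 + \epsilon \pmod{27}$, so $n \equiv 0$ or $n \equiv 26 \pmod{27}$. In both cases a direct check shows that $\binom{n+1}{2} \equiv 0 \pmod{27}$: if $n = 27s$ then $\binom{n+1}{2} = 27s(27s+1)/2$, and if $n = 27s+26$ then $\binom{n+1}{2} = (27s+26)\cdot 27(s+1)/2$; in either case the factor of $27$ survives the division by $2$.

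Finally, by \eqref{id0}, the only terms contributing to $MO(0, 27J+26; 27N+26) \pmod{3}$ come from pairs $(k,n)$ with $k + \binom{n+1}{2} = 27N+26$ and $c_n(0, 27J+26) \not\equiv 0 \pmod 3$. The previous step forces $\binom{n+1}{2} \equiv 0 \pmod{27}$, hence $k \equiv 26 \pmod{27}$, and then \eqref{pod_27_26_mod3} gives $pod(k) \equiv 0 \pmod{3}$, completing the argument. I do not anticipate any serious obstacle; the only slightly delicate point is the parity-driven case split that turns the condition on $m$ into a condition on $n$, but both resulting residues of $n$ modulo $27$ lead to $\binom{n+1}{2} \equiv 0 \pmod{27}$, which is exactly what is needed to invoke the $pod$ congruence.
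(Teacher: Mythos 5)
Your proposal is correct and follows essentially the same route as the paper: restrict to $n\equiv 0,26\pmod{27}$ via the vanishing of $c_n(0,27J+26)$ modulo $3$, deduce $\binom{n+1}{2}\equiv 0\pmod{27}$, hence $k\equiv 26\pmod{27}$, and invoke \eqref{pod_27_26_mod3}. The only difference is that you spell out the Lucas's-theorem computation (which the paper leaves implicit), and your parity case split and divisibility checks are all accurate.
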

\begin{proof} If $t=27J+26$ and $n\not\equiv 0, 26 \pmod{27}$ then
$c_n(0, t)\equiv 0\pmod{3}$.
Therefore, from \eqref{id0}, 
$$MO(0, 27J+26; 27N+26) \equiv \sum_{k+\binom{n+1}{2}=27N+26}pod(k)\cdot c_n(0, t)\equiv 0 \pmod{3}$$
because $\binom{n+1}{2}\equiv 0 \pmod{27}$ 
when $n\equiv 0, 26 \pmod{27}$, which implies $k\equiv 26 \pmod{27}$  
and, thanks to \eqref{pod_27_26_mod3}, 
$pod(k)\equiv 0 \pmod{3}$ for such values of $k$.
\end{proof} 

\section{Appendix: $\B3(15n+7)\equiv 0\pmod{5}$}

We close this paper by providing an elementary proof that, for all $n\geq 0$, 
$$
\B3(15n+7)\equiv 0\pmod{5}.
$$  
(This fact was used in the proof of Theorem \ref{MOm1m5} above.)

From Ballantine and Merca \cite{Ballantine-Merca}, we know that
$$\sum_{n\geq 0}\B3(n)q^n=\frac{f_2f_3}{f_1^2 f_6}=\sum_{n\geq 0}\op(n)q^n \cdot \frac{f_3}{f_6}$$
where $\op(n)$ is the number of overpartitions of $n$ \cite{CL, HSover}.
Recalling that 
$$(-q;-q)_{\infty}=\frac{f_2^3}{f_1f_4},
$$ 
it follows that
$$\sum_{n\geq 0}\B3(n)(-q)^n=\sum_{n\geq 0}\op(n)(-q)^n \cdot \frac{\frac{f_6^3}{f_3f_{12}}}{f_6}=\sum_{n\geq 0}\op(n)(-q)^n \cdot \frac{f_6^2}{f_3f_{12}}.$$
Therefore
\begin{align*}
\sum_{n\geq 0}\B3(3n+1)(-q)^n&=\sum_{n\geq 0}\op(3n+1)(-q)^n \cdot \frac{f_2^2}{f_1f_{4}} \\
&=2\frac{f_1^6 f_4^6f_6^9}{f_2^{16}f_3^3f_{12}^3}\\
&=2\left(\frac{f_1 f_4f_6^2}{f_2^{3}f_3f_{12}}\right)^5\cdot \frac{f_1 f_4}{f_2}
\cdot\frac{f_3^2 f_{12}^2}{f_6}
\end{align*}
where we applied  \cite[see the formula at the end of page 4]{HSover} 
$$\sum_{n\geq 0}\op(3n+1)(-q)^n =2\frac{\varphi(q^3)^2}{\varphi(q)^4}\cdot X(q)=
2\frac{f_1^7 f_4^7f_6^9}{f_2^{18}f_3^3f_{12}^3},$$
with
$$\varphi(q)=\sum_{n=-\infty}^{\infty} q^{n^2}=\frac{f_2^5}{f_1^2 f_4^2}\quad\text{and}\quad
X(q)=\sum_{n=-\infty}^{\infty} q^{3n^2+2n}=\frac{f_2^2 f_3 f_{12}}{f_1f_4 f_6}.$$
Hence
$$\sum_{n\geq 0}\B3(3n+1)(-q)^n\equiv F(q^5) \cdot \frac{f_1 f_4}{f_2}
\cdot\frac{f_3^2 f_{12}^2}{f_6}\pmod{5}$$
where
$$F(q^5)=2\frac{f_5f_{20}f_{30}^2}{f_{10}^3f_{15}f_{60}}\equiv 2\left(\frac{f_1 f_4f_6^2}{f_2^{3}f_3f_{12}}\right)^5\pmod{5}.$$
Since $15n+7=3(5n+2)+1$, it suffices to show that the coefficient of $q^{5n+2}$ in 
$\frac{f_1 f_4}{f_2}
\cdot\frac{f_3^2 f_{12}^2}{f_6}$ is divisible by $5$. 
Indeed, from \cite[Theorem 1.1]{LO}, we see that 
$$\frac{f_1 f_4}{f_2}=\sum_{k\geq 0}(-q)^{\frac{k(k+1)}{2}}$$
and
$$\frac{f_3^2 f_{12}^2}{f_6}=\sum_{j\geq 1}\legendre{j}{3}jq^{j^2-1}.$$
Thus
$$[q^{5n+2}]\frac{f_1 f_4}{f_2}
\cdot\frac{f_3^2 f_{12}^2}{f_6}=\sum_{\frac{k(k+1)}{2}+j^2-1=5n+2}\left((-1)^{\frac{k(k+1)}{2}}\legendre{j}{3}j\right)\equiv 0\pmod{5}$$
because $\frac{k(k+1)}{2}+j^2-1=5n+2$ implies $k(k+1)+2j^2\equiv 1 \pmod{5}$.
Notice that, modulo 5, $k(k+1)\in\{0,1,2\}$ and $2j^2\in\{0,2,3\}$. Thus the previous congruence is satisfied only when $k(k+1)\equiv 1$ and $2j^2\equiv 0$. Hence we may conclude that $j$ is divisible by $5$, and this yields our result.


\begin{thebibliography}{99}

\bibitem{AAT1} T. Amdeberhan, G. E. Andrews, and R. Tauraso,
\emph{ Extensions of MacMahon's sums of divisors}, 
Res. Math. Sci. {\bf 11} (2024), Article 8.

\bibitem{AAT2} T. Amdeberhan, G. E. Andrews, and R. Tauraso,
\emph{Further study on MacMahon-type sums of divisors},
Res. Number Theory {\bf 11} (2025), Article 19.


\bibitem{AOS} T. Amdeberhan, K. Ono, and A. Singh,
\emph{MacMahon's sums-of-divisors and allied $q$-series},
Adv. Math. \textbf{452} (2024), 109820.

\bibitem{Andrews} G. E. Andrews, 
\emph{Theory of partitions}, Cambridge Univ. Press (1998).

\bibitem{Andrews-Rose} G. E. Andrews and S. C. F. Rose,
\emph{MacMahon's sum-of-divisors functions, Chebyshev polynomials, and quasi-modular forms}, 
J. Reine Angew. Math. \textbf{676} (2013), 97--103.

\bibitem{Ballantine-Merca}  C. Ballantine and M. Merca, 
\emph{Almost 3-regular overpartitions},
Ramanujan J. \textbf{58} (2022), 957--971. 

\bibitem{CL}
S. Corteel and J. Lovejoy, 
\emph{Overpartitions},
Trans. Amer. Math. Soc. \textbf{356}, no. 4 (2004), 1623--1635.

\bibitem{CuiGuMa} S.-P. Cui, W. X. Gu, and Z. S. Ma,
\emph{Congruences for partitions with odd parts distinct modulo 5},
Int. J. Number Theory \textbf{11} (2015), 2151--2159. 

\bibitem{Fine}  N. J. Fine, \emph{Binomial coefficients modulo a prime}
Am. Math. Mon. \textbf{54} (1947), 589--592.

\bibitem{Gandhi}  J. M. Gandhi, \emph{Congruences for $p_r(n)$ and Ramanujan's $\tau$-functions}
Am. Math. Mon. \textbf{70} (1963), 265--274. 

\bibitem{HSpod}  M. D. Hirschhorn and J. A. Sellers, \emph{Arithmetic properties of partitions with odd parts pistinct}, Ramanujan J.  \textbf{22}, no. 3
(2010), 273--284.

\bibitem{HSover} M. D. Hirschhorn and J. A. Sellers,
\emph{Arithmetic Properties for Overpartitions}, J. Comb. Math. Comb. Comput. (JCMCC) \textbf{53} (2005), 65--73.

\bibitem{KO} I. Kiming and J. Olsson, \emph{Congruences like Ramanujan’s for powers of the partition function}, Arch. Math. \textbf{59} (1992), 348--360.

\bibitem{MacMahon} P. A. ~MacMahon, 
\emph{Divisors of Numbers and their Continuations in the Theory of Partitions},
Proc. London Math. Soc. (2) \textbf{19} (1920), no.1, 75-113 
[also in Percy Alexander MacMahon Collected Papers, Vol.2, pp. 303--341 (ed. G.E. Andrews), MIT Press, Cambridge, 1986].

\bibitem{LO} R. J. Lemke Oliver, 
\emph{Eta--quotients and theta functions}, 
Adv. Math. {\bf 241} (2013), 1--17.

\bibitem{OS} K. Ono and A. Singh, 
\emph{Remarks on MacMahon's $q$-series},
J. Combin. Theory Ser. A \textbf{207} (2024), 105921.

\bibitem{RSpod}
S. Radu and J. A. Sellers, \emph{Congruence properties modulo 5 and 7 for the $pod$ function}, Int. J. Number Theory \textbf{7}, no.
8 (2011), 2249--2259.

\bibitem{R21} S. Ramanujan, \emph{Congruence properties of partitions}, 
Math. Z. \textbf{9} (1921), 147--153. 

\bibitem{SB} 
N. Saikia and C. Boruah, \emph{General congruences modulo 5 and 7 for colour partitions}, J. Anal. \textbf{29}, no. 3 (2021), 917--926.

\end{thebibliography}
\end{document}